\documentclass{article}

\usepackage[utf8]{inputenc}
\usepackage[dvipsnames]{xcolor}
\usepackage[margin=1.25in]{geometry}
\usepackage[english]{babel}
\usepackage{amsfonts, amsmath, amscd, amsmath, amsthm, amssymb}
\usepackage[mathscr]{eucal}
\usepackage{indentfirst} 
\usepackage{graphics, graphicx}
\usepackage{fancyhdr}  
\usepackage{mathrsfs}
\usepackage{braket}  
\usepackage{array}
\usepackage{hyperref}
\usepackage{cleveref}
\usepackage{thmtools}
\usepackage{mathtools}
\usepackage{enumitem}
\usepackage[framemethod=TikZ]{mdframed}


\DeclareMathSymbol{\shortminus}{\mathbin}{AMSa}{"39}

\definecolor{gentlered}{HTML}{E76F64}


%
	{\end{mdframed}
}

\definecolor{creme}{HTML}{F2D6B4}
\mdfdefinestyle{cremetab}{%
	skipabove=4pt,
	linewidth=2pt,
	rightline=false,
	leftline=true,
	topline=false,
	bottomline=false,
	linecolor=creme!60!Black,
	backgroundcolor=creme!20,
}
\declaretheoremstyle[
	headfont=\bfseries\sffamily\color{creme!60!Black},
	bodyfont=\normalfont,
	spaceabove=0.5pt,
	spacebelow=0.5pt,
	mdframed={style=cremetab},
	headpunct={ --- },
]{definition}

\definecolor{softgreen}{HTML}{A9CBB8}
\mdfdefinestyle{greenrbox}{%
	roundcorner =10pt,
	linewidth=1pt,
	skipabove=12pt,
	innerbottommargin=9pt,
	skipbelow=2pt,
	linecolor=softgreen!80!black,
	nobreak=true,
	backgroundcolor=softgreen!10,
}
\declaretheoremstyle[
	headfont=\sffamily\bfseries\color{softgreen!80!black},
	mdframed={style=greenrbox},
	headpunct={\\[3pt]},
	postheadspace={0pt}
]{result}

\definecolor{fgblue}{HTML}{7696BA}
\definecolor{bgblue}{HTML}{9DC6DD}
\mdfdefinestyle{bluetab}{%
	skipabove=4pt,
	linewidth=2pt,
	rightline=false,
	leftline=true,
	topline=false,
	bottomline=false,
	linecolor=fgblue,
	backgroundcolor=bgblue!12,
}
\declaretheoremstyle[
	headfont=\bfseries\sffamily\color{fgblue},
	bodyfont=\normalfont,
	spaceabove=0.5pt,
	spacebelow=0.5pt,
	mdframed={style=bluetab},
	headpunct={ --- },
]{theorem}

\definecolor{nicegray}{HTML}{B5B7AE}
\mdfdefinestyle{graytab}{%
	skipabove=4pt,
	linewidth=2pt,
	rightline=false,
	leftline=true,
	topline=false,
	bottomline=false,
	linecolor=nicegray!90!black,
	backgroundcolor=nicegray!10,
}
\declaretheoremstyle[
	headfont=\bfseries\sffamily\color{nicegray!90!black},
	bodyfont=\normalfont,
	spaceabove=0.5pt,
	spacebelow=0.5pt,
	mdframed={style=graytab},
	headpunct={ --- },
]{corollary}

\definecolor{niceorange}{HTML}{EA9977}
\mdfdefinestyle{orangebox}{%
	linewidth=0.5pt,
	skipabove=12pt,
	frametitleaboveskip=5pt,
	frametitlebelowskip=0pt,
	skipbelow=2pt,
	frametitlefont=\bfseries,
	innertopmargin=4pt,
	innerbottommargin=8pt,
	nobreak=true,
	backgroundcolor=niceorange!5,
	linecolor=niceorange!85!black,
}
\declaretheoremstyle[
	headfont=\bfseries\color{niceorange!85!black},
	mdframed={style=orangebox},
	headpunct={\\[3pt]},
	postheadspace={0pt},
]{example}

\definecolor{nicepink}{HTML}{FADADD}
\mdfdefinestyle{pinktab}{%
	skipabove=4pt,
	skipbelow=4pt,
	linewidth=2pt,
	rightline=false,
	leftline=true,
	topline=false,
	bottomline=false,
	linecolor=nicepink!80!black,
	backgroundcolor=nicepink!25,
}
\declaretheoremstyle[
	headfont=\bfseries\sffamily\color{nicepink!80!black},
	bodyfont=\normalfont,
	spaceabove=0.5pt,
	spacebelow=0.5pt,
	mdframed={style=pinktab},
	headpunct={ --- },
]{lemma}

\theoremstyle{definition}

\Crefname{define}{Def.}{Defs.}
\Crefname{res}{Res.}{Res.}
\Crefname{theo}{Thm.}{Thms.}
\Crefname{cor}{Cor.}{Cors.}
\Crefname{ex}{Ex.}{Exs.}
\usepackage{subfiles}
\usepackage{enumitem}
\usepackage[utf8]{inputenc}
\usepackage[dvipsnames]{xcolor}
\usepackage[margin=1.25in]{geometry}
\usepackage[english]{babel}
\usepackage{amsfonts, amsmath, amscd, amsmath, amssymb}
\usepackage[mathscr]{eucal}
\usepackage{indentfirst} 
\usepackage{graphics, graphicx}
\usepackage{fancyhdr}  
\usepackage{mathrsfs}
\usepackage{braket}  
\usepackage{array}
\usepackage{hyperref}
\usepackage{cleveref,enumitem}
\usepackage{thmtools}
\usepackage{mathtools, tikz}
\usepackage{graphicx}
\usepackage{float}
\graphicspath{ {./images/} }
\usepackage{listings}
\lstset{language=Python}
\lstset{frame=lines}
\lstset{label={lst:code_direct}}
\lstset{basicstyle=\footnotesize}
\usepackage[framemethod=TikZ]{mdframed}

\usepackage{amsthm}
\theoremstyle{plain}
\newtheorem{thm}{Theorem}[section] 
\newtheorem{defn}[thm]{Definition} 
\newtheorem{lmm}[thm]{Lemma} 
\newtheorem*{con*}{Conjecture}
\theoremstyle{remark}
\newtheorem{rem}{Remark} 


\usepackage[citestyle=numeric,bibstyle=numeric,backend=bibtex,giveninits=true,doi=false,eprint=false,isbn=false,maxnames=6]{biblatex}
\addbibresource{refs.bib}




\begin{document}
\begin{center}
    \Huge \textbf{Graphical Distances \& Inertia} 
    
    \vspace{0.75cm}
    
    \Large \textbf{Jeffrey Cheng\footnote{Department of Mathematics, The University of Texas at Austin, Austin, TX. email:  jeffrey.cheng@utexas.edu}, Charles Johnson\footnote{Department of Mathematics, The College of William \& Mary, Williamsburg, VA. email: crjohn@wm.edu}, Ian M.J. McInnis\footnote{Department of Mathematics, Princeton University, Princeton, NJ. email: imj@princeton.edu}, \& Matthew Yee\footnote{Department of Mathematics, Brown University, Providence, RI. email: matthew\_w\_yee@brown.edu}}
    
    \vspace{0.5cm}
    
    \normalsize \textbf{Abstract}
    
    \vspace{0.1cm}
    
    \normalsize \par We study the inertia of distance matrices of weighted graphs. Our novel congruence-based proof of the inertia of weighted trees extends to a proof for the inertia of weighted unicyclic graphs whose cycle is a triangle. Partial results are given on the inertia of other rationally weighted unicylic graphs. 
\end{center}

    \vspace{0.5cm}
    
    \normalsize \noindent \textbf{Keywords/phrases:} distance matrices of graphs, eigenvalues of graphs, euclidean distance matrices, inertia, matrix completion problems, unipositive matrices, weighted graphs 
    
    \vspace{0.1cm}
    
    
    \normalsize \noindent \textbf{AMS Classification: 05C12, 15A83, 15B99} 
    

\section{Introduction}
Let $G$ be a simple, undirected, connected, edge-weighted graph on $n$ vertices, with vertices labeled $v_1, ..., v_n$. The \textit{distance matrix} $D=D(G)$ of $G$ is defined to be $D=[d_{ij}]$, where $d_{ij}$ is the minimum sum of the weights of any path from $v_i$ to $v_j$. The \textit{inertia} of a symmetric matrix $D$ is the ordered triple of integers: $n(D)=(n_+(D), n_0(D), n_-(D))$, where $n_+(D)$, $n_0(D)$, and $n_-(D)$ denote the number of positive, 0, and negative eigenvalues of $D$ respectively. 

\begin{rem} Note that the labeling of the vertices does not change the eigenvalues of $D$. Any two labelings differ by a permutation, and the eigenvalues of a square matrix are invariant under permutation. Thus, we may label vertices however we want. \end{rem}

This ``distance inertia'' $n(D(G))$ has been studied extensively. 
Early results concerned only the inertia of unweighted graphs. Graham and Pollack \cite{MR289210} showed that the inertia of an unweighted tree on $n$ vertices is $(1,0,n-1)$. Bapat, Kirkland, and Neumann \cite{MR2133282} extended this result to weighted trees and also characterized the inertia of unweighted unicyclic graphs. Zhang and Godsil \cite{MR3056981} characterized the inertia of unweighted cactus graphs. Further survey, along with some results on weighted graphs, appears in \cite{ChengMcInnisYee}.

\begin{rem} A \textit{cactus graph} is a connected graph of which all cycles are pairwise edge-disjoint.  Since trees and unicyclic graphs are special cases of cactus graphs, the results of \cite{MR3056981} subsume the results on unweighted graphs from \cite{MR289210} and \cite{MR2133282}. \end{rem}

\begin{defn} A matrix $D$ is \textbf{weakly unipositive} if $n_+(D)=1$. It is \textbf{strongly unipositive} if furthermore $n_0(D)=0$. Call a weighted graph $G$ strongly [weakly] unipositive when $D(G)$ is strongly [weakly] unipositive. A graph is \textbf{universally strongly [weakly] unipositive} if it is strongly [weakly] unipositive for any weighting. \end{defn}

\section{Inertia using Congruences}
We present a novel proof using congruences of the fact that trees are universally strongly unipositive (this fact was originally presented in \cite{MR2133282}). Furthermore, we extend this strategy to show that unicyclic graphs such that the graph is a 3-cycle are universally stongly unipositive as well. We begin by mentioning a couple of crucial theorems:
\begin{thm}
\textbf{(Sylvester's Law of Inertia)} Let $A \in M_n(\mathbb{R})$ be symmetric. Then, the given symmetric $B \in M_n(\mathbb{R})$, $A$ and $B$ have the same inertia $\iff$ $A=SBS^{T}$ for some non-singular $S$.
\end{thm}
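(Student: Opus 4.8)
The plan is to reduce both symmetric matrices to a common canonical form under congruence, and separately to verify that the inertia triple is itself a congruence invariant; combined, these two facts give the biconditional. I would treat the forward implication (congruence forces equal inertia) first, since it is the substantive direction, and then obtain the converse from the canonical form almost for free.

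For the implication ``$A=SBS^T \Rightarrow$ equal inertia,'' I would argue as follows. Since $S$ is nonsingular, $\operatorname{rank}(A)=\operatorname{rank}(B)$, so the zero count $n_0=n-\operatorname{rank}$ is immediately preserved. For the positive count the key tool is the variational characterization
\[
n_+(A)=\max\{\dim V : x^{T}Ax>0 \text{ for all nonzero } x\in V\},
\]
which I would derive from the spectral theorem: the span of eigenvectors with positive eigenvalues gives a subspace of dimension $n_+(A)$ on which the form is positive, while any subspace of strictly larger dimension must meet the span of the remaining (nonpositive) eigenvectors nontrivially by dimension counting, forcing a nonpositive value there. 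Given this, the substitution $x^{T}Ax=(S^{T}x)^{T}B(S^{T}x)$ together with the fact that $x\mapsto S^{T}x$ is a linear isomorphism shows that positive-definite subspaces for $A$ correspond to positive-definite subspaces for $B$ of the same dimension. Hence $n_+(A)=n_+(B)$, and with $n_0$ and the total $n$ already fixed, $n_-$ agrees as well.

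For the converse I would bring each matrix to a canonical diagonal form by congruence. The spectral theorem gives $A=Q\Lambda Q^{T}$ with $Q$ orthogonal (hence an admissible congruence) and $\Lambda$ diagonal; scaling each nonzero diagonal entry by $1/\sqrt{|\lambda_i|}$ and permuting coordinates---both congruences---reduces $A$ to $J=\operatorname{diag}(I_{n_+},\,0_{n_0},\,-I_{n_-})$. If $A$ and $B$ share the same inertia they reduce to the \emph{same} $J$, and since congruence is symmetric and transitive, $A$ and $B$ are congruent.

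The main obstacle is precisely the forward direction, because congruence does not preserve the eigenvalues themselves, only their signs, so one cannot simply track eigenvalues. The variational characterization of $n_+$ is what makes the sign data robust under the change of coordinates $x\mapsto S^{T}x$, and establishing it carefully---especially the dimension-counting step for the upper bound---is the technical heart of the argument. Once the spectral theorem and this characterization are in hand, the converse is essentially bookkeeping.
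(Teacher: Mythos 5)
Your proof is correct, but there is nothing in the paper to compare it against: the paper states Sylvester's Law of Inertia as background (a known tool imported for its congruence arguments) and gives no proof of it. Your argument is the standard textbook one, and both halves are sound. The forward direction correctly isolates the two invariants separately---$n_0$ via rank preservation under nonsingular congruence, and $n_+$ via the variational characterization
\[
n_+(A)=\max\{\dim V : x^{T}Ax>0 \text{ for all nonzero } x\in V\},
\]
whose two inequalities (the positive eigenspace as a witness, and the dimension-counting intersection with the nonpositive eigenspace for the upper bound) you identify accurately; the substitution $x^{T}Ax=(S^{T}x)^{T}B(S^{T}x)$ then transports positive-definite subspaces bijectively because $S^{T}$ is an isomorphism, giving $n_+(A)=n_+(B)$, and $n_-$ follows by subtraction. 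The converse via reduction to the signature matrix $J=\operatorname{diag}(I_{n_+},0_{n_0},-I_{n_-})$ (spectral theorem, then diagonal scaling by $1/\sqrt{|\lambda_i|}$, then a permutation, all congruences) together with symmetry and transitivity of congruence is likewise complete. One small point worth making explicit if you write this up fully: equality in the forward direction needs both inequalities, which follow from applying your subspace correspondence in both directions using $B=S^{-1}A\bigl(S^{-1}\bigr)^{T}$; your phrase ``correspond \ldots of the same dimension'' implicitly contains this, but it is the only place a reader could ask for more detail.
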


\begin{thm}\label{Cauchy}
\textbf{(Cauchy Interlacing Theorem)}
Suppose $A \in M_n(\mathbb{R})$ is symmetric. Let $B \in M_m(\mathbb{R})$ with $m < n$ be a principal submatrix. Suppose $A$ has eigenvalues $\alpha_1, ..., \alpha_n$ and $B$ has eigenvalues $\beta_1, ..., \beta_m$. Then:
\begin{flalign*}
\alpha_k \leq \beta_k \leq \alpha_{k+n-m} \text{    for k=1, ..., m.}
\end{flalign*} 
\end{thm}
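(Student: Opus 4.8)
The plan is to deduce the interlacing inequalities from the Courant--Fischer variational (min--max) characterization of the eigenvalues of a real symmetric matrix, which I would treat as the one nontrivial input. Writing the eigenvalues in increasing order $\alpha_1 \le \cdots \le \alpha_n$ and $\beta_1 \le \cdots \le \beta_m$, and letting $R_A(x) = \frac{x^T A x}{x^T x}$ denote the Rayleigh quotient, Courant--Fischer asserts that $\alpha_k = \min_{\dim V = k}\max_{0 \ne x \in V} R_A(x) = \max_{\dim V = n-k+1}\min_{0 \ne x \in V} R_A(x)$, where $V$ ranges over subspaces of $\mathbb{R}^n$, and similarly for $B$ over subspaces of $\mathbb{R}^m$.

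First I would normalize the problem. Since permuting the vertex labels conjugates $A$ by a permutation matrix and leaves its eigenvalues unchanged (as noted in the earlier remark), I may assume the principal submatrix $B$ sits in the upper-left corner, i.e.\ $B = A[\{1,\dots,m\}]$. The key observation is then the zero-padding embedding $\mathbb{R}^m \hookrightarrow \mathbb{R}^n$, $y \mapsto \tilde y := (y,0,\dots,0)$: because $B$ is the leading block, $y^T B y = \tilde y^T A \tilde y$ and $y^T y = \tilde y^T \tilde y$, so $R_B(y) = R_A(\tilde y)$. Consequently every $k$-dimensional subspace $W \subseteq \mathbb{R}^m$ maps to a $k$-dimensional subspace $\tilde W \subseteq \mathbb{R}^n$ on which the two Rayleigh quotients agree pointwise.

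For the lower bound $\alpha_k \le \beta_k$ I would use the min--max form: for any $k$-dimensional $W$, the particular subspace $\tilde W$ is an admissible competitor in the minimization defining $\alpha_k$, so $\alpha_k \le \max_{0 \ne \tilde y \in \tilde W} R_A(\tilde y) = \max_{0 \ne y \in W} R_B(y)$, and taking the minimum over $W$ gives $\alpha_k \le \beta_k$. For the upper bound $\beta_k \le \alpha_{k+n-m}$ I would use the max--min form together with the dimension bookkeeping $n-(k+n-m)+1 = m-k+1$: an $(m-k+1)$-dimensional $W \subseteq \mathbb{R}^m$ embeds as an $(m-k+1)$-dimensional $\tilde W$, which is an admissible competitor in the maximization defining $\alpha_{k+n-m}$, so $\alpha_{k+n-m} \ge \min_{0 \ne y \in W} R_B(y)$, and taking the maximum over $W$ yields $\alpha_{k+n-m} \ge \beta_k$.

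The main obstacle is really just Courant--Fischer itself, which I would either cite or establish separately via a dimension-counting (eigenspace intersection) argument; once it is in hand the interlacing is bookkeeping. A cleaner-to-present alternative sidesteps the two-sided index shift: prove the case $m = n-1$, where the statement reads $\alpha_k \le \beta_k \le \alpha_{k+1}$, and then iterate through a chain $A = A_0, A_1, \dots, A_{n-m} = B$ of successive principal submatrices, composing the one-step shifts to accumulate the total shift of $n-m$. The subtlety to watch in either route is the index arithmetic in the upper inequality, which is exactly where the increasing-order convention and the $m-k+1$ dimension count must line up.
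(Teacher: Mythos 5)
The paper does not prove this statement: the Cauchy Interlacing Theorem is quoted as one of two background ``crucial theorems'' and is then used purely as a tool in the later congruence arguments, so there is no internal proof to compare yours against and your proposal must stand on its own. It does. The Courant--Fischer route is the standard proof of interlacing, and your details check out: conjugating by a permutation matrix to place $B$ in the leading position changes the spectrum of neither $A$ nor $B$; the zero-padding embedding gives $R_B(y)=R_A(\tilde y)$ precisely because $B$ is the leading block; the min--max form with the $k$-dimensional competitor $\tilde W$ yields $\alpha_k \le \beta_k$; and the dimension count $n-(k+n-m)+1=m-k+1$ makes $\tilde W$ an admissible competitor in the max--min characterization of $\alpha_{k+n-m}$, giving $\beta_k \le \alpha_{k+n-m}$ after maximizing over $W$ and invoking Courant--Fischer for $B$. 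Your alternative route --- prove the codimension-one case $m=n-1$ and iterate down a chain of successive principal submatrices, accumulating the total index shift of $n-m$ --- is equally valid and is the form most often seen in textbooks. Two points worth making explicit in a write-up: first, the paper's statement leaves the eigenvalue ordering unspecified, and the inequalities as written require the increasing-order convention you adopt, so state it; second, Courant--Fischer carries essentially all the nontrivial content here, so it should be either cited or proved (by the usual argument intersecting a candidate subspace with a span of eigenvectors, which is a dimension count), exactly as you flag.
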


Using congruences, we are able to show that the distance matrix of a weighted tree has the same inertia as a "normal form"; namely, the distance matrix of any tree is congruent to a matrix of the form:
\[
D=\begin{bmatrix} 
    0 & d_1 & d_2 & \dots & d_{n-1}\\
    d_1 & -2d_1 & 0 & \dots & 0\\
    d_2 & 0 & -2d_2 & \dots & 0\\
    \vdots & \vdots & \vdots & \ddots & \vdots\\
    d_{n-1} & 0 & 0 & \dots & -2d_{n-1} 
    \end{bmatrix}
\]
The inertia of this normal form can be easily calculated. Using Sylvester's Law of Inertia, we obtain the desired results for the original graph.

We start by showing the congruence strategy in the case when the graph is a weighted star:
\begin{lmm}
The distance matrix of a weighted simple star on $n$ vertices has 1 positive eigenvalue and $(n-1)$ negative eigenvalues.
\end{lmm}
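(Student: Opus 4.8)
The plan is to realize the star's distance matrix as congruent to the stated normal form by a single explicit congruence, and then read off the inertia of that normal form. Label the center $v_1$ and the leaves $v_2,\dots,v_n$, and let $w_i>0$ be the weight of the edge $v_1v_i$. Since every leaf-to-leaf path must pass through the center, the distance matrix $D$ has $d_{1i}=w_i$, $d_{ii}=0$, and $d_{ij}=w_i+w_j$ for distinct leaves $i,j\ge 2$. The crucial observation is that $d_{ij}=d_{1i}+d_{1j}$ for leaves, so the leaf-to-leaf block is built additively out of the center row, and this additivity is exactly what a congruence can strip away.

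First I would subtract the center row from each leaf row (and the center column from each leaf column), i.e. conjugate by the unit lower-triangular matrix $S$ with $S_{ii}=1$ for all $i$ and $S_{i1}=-1$ for $i\ge 2$. Because $\det S=1$, $S$ is nonsingular, so $SDS^{T}$ is congruent to $D$. A direct computation shows that the leaf block collapses: each off-diagonal entry $w_i+w_j$ becomes $0$ and each leaf diagonal entry $0$ becomes $-2w_i$, while the center row and column are unchanged. Hence $SDS^{T}$ is precisely the normal form displayed above, with $d_{i-1}=w_i$.

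It then remains to compute the inertia of the normal form, which I write as $\begin{bmatrix}0 & b^{T}\\ b & \Lambda\end{bmatrix}$ with $\Lambda=\operatorname{diag}(-2w_2,\dots,-2w_n)$ and $b=(w_2,\dots,w_n)^{T}$. Since all weights are positive, $\Lambda$ is nonsingular with $n-1$ negative eigenvalues, so I would perform one further congruence using the nonzero diagonal pivots $-2w_i$ to clear $b$. This replaces the top-left $0$ by its Schur complement $-b^{T}\Lambda^{-1}b=\tfrac12\sum_{i\ge 2}w_i>0$ and leaves a diagonal matrix with one positive and $n-1$ negative entries. By Sylvester's Law of Inertia, $D$ therefore has inertia $(1,0,n-1)$, as claimed.

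The only genuine work is identifying the congruence $S$ and checking that it yields the normal form; once the matrix is triangularized the sign count is immediate, so I expect the verification of $SDS^{T}$ to be the main (though routine) obstacle. The positivity of the weights is what guarantees both that $\Lambda$ is invertible, so that no zero eigenvalues arise, and that the Schur complement is strictly positive, giving strong rather than merely weak unipositivity.
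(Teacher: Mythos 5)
Your proposal is correct, and its first half coincides exactly with the paper's argument: your congruence matrix $S$ is precisely the product $A=\prod_{k}A_{k1}$ used there, and both proofs pass through the identical bordered-diagonal normal form. Where you genuinely diverge is in extracting the inertia of that normal form. The paper invokes the Cauchy interlacing theorem twice: the $2\times 2$ leading principal submatrix (one positive, one negative eigenvalue) forces at least one positive eigenvalue of the full matrix, and the diagonal trailing $(n-1)\times(n-1)$ block forces at least $n-1$ negative ones, which together pin down the inertia. You instead perform a second congruence, using the nonzero pivots $-2w_i$ to clear the border, replacing the top-left zero by the Schur complement $-b^{T}\Lambda^{-1}b=\tfrac12\sum_{i\ge 2}w_i>0$ and leaving the diagonal matrix $\operatorname{diag}\bigl(\tfrac12\sum_{i\ge 2}w_i,\,-2w_2,\dots,-2w_n\bigr)$. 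Your route stays entirely inside the congruence framework, with Sylvester's law as the only tool, and it yields $n_0=0$ directly from a nonsingular diagonal representative rather than from a counting argument; it also sidesteps the explicit $2\times 2$ eigenvalue computation, which the paper in fact records incorrectly (the eigenvalues are $(\sqrt2-1)d_{12}$ and $-(\sqrt2+1)d_{12}$, not $(2\sqrt2\mp 1)d_{12}$, though only their signs are used). What the interlacing argument buys in exchange is robustness when the trailing block is no longer diagonal and cannot be inverted so transparently, which is exactly the situation the paper faces in Lemma \ref{star4} for the star plus an edge; notably, though, your Schur-complement device is the very technique the paper itself later deploys in the inductive step of Lemma \ref{weightedstaredge}, so your finish arguably fits the paper's program even more squarely than its own.
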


\begin{proof}
Let the center vertex of the star be $v_1$, the ordering of the other vertices does not matter. Now construct the matrix $D = [d_{ij}]$ such that $d_{ij}$ is the weighted path distance between vertices $i$ and $j$. Note that in a simple star, the weighted path distance between two non-central vertices $v_i$ and $v_j$ is $d_{1i}+d_{1j}$. Thus we have:
\[
D =\begin{bmatrix} 
    0 & d_{12} & d_{13} & \dots & d_{1n}\\
    d_{12} & 0 & d_{13}+d_{12} & \dots & d_{12}+d_{1n}\\
    d_{13} & d_{12}+d_{13} & 0 & \dots & d_{13}+d_{1n}\\
    \vdots & \vdots & \vdots & \ddots & \vdots\\
    d_{1n} & d_{1n}+d_{12} & d_{1n}+d_{13} & \dots & 0 
    \end{bmatrix}
\]
Now, consider the matrix:
\begin{flalign*}
A_{ij}=I-E(i,j)
\end{flalign*}
i.e. $A_{ij}$ is the matrix with 1's on the diagonal, -1 in the ij'th position, and 0's everywhere else. Clearly, each $A_{ij}$ is non-singular, so any product of $A_{ij}$ is also non-singular. Note that $A_{ij}B$ subtracts the $j$th row from the $i$th row of $B$ and $BA_{ij}^T$ subtracts the $j$th column from the $i$th column of $B$.
Let $A=\prod_{k=1}^nA_{k1}$. Then, we have the following matrix:
\[
AD =\begin{bmatrix} 
    0 & d_{12} & d_{13} & \dots & d_{1n}\\
    d_{12} & -d_{12} & d_{12} & \dots & d_{12}\\
    d_{13} & d_{13} & -d_{13} & \dots & d_{13}\\
    \vdots & \vdots & \vdots & \ddots & \vdots\\
    d_{1n} & d_{1n} & d_{1n} & \dots & -d_{1n} 
    \end{bmatrix}
\]
Note that $A^T=\prod_{k=1}^nA_{1k}$, so where $AB$ subtracts the first row from every other row in $B$, $BA^T$ subtracts the first column from every other column in $B$. Applying this to $AD$, we end up with:
\[
ADA^T =\begin{bmatrix} 
    0 & d_{12} & d_{13} & \dots & d_{1n}\\
    d_{12} & -2d_{12} & 0 & \dots & 0\\
    d_{13} & 0 & -2d_{13} & \dots & 0\\
    \vdots & \vdots & \vdots & \ddots & \vdots\\
    d_{1n} & 0 & 0 & \dots & -2d_{1n} 
    \end{bmatrix}
\]
Clearly, $D$ is congruent to $ADA^T$.

Thus, by Sylvester's Law of Inertia, it suffices to show that $ADA^T$ has 1 positive eigenvalue and (n-1) negative eigenvalues. This is easily done via an interlacing argument: firstly, notice that the $2\times 2$ leading principal submatrix of $ADA^T$ has eigenvalues $(2\sqrt{2}-1)d_{12}, -(2\sqrt{2}+1)d_{12}$. Thus, by the Cauchy Interlacing Theorem, it follows that $ADA^T$ has at least 1 positive eigenvalue. Now, consider the lower right $(n-1)\times (n-1)$ submatrix of $ADA^T$. It is a diagonal matrix, so it is easily seen to have eigenvalues $-2d_{12}, ..., -2d_{1n}$. Thus, by the Cauchy Interlacing Theorem, it follows that $ADA^T$ has n-1 negative eigenvalues. Thus, $ADA^T$ has 1 positive eigenvalue and $(n-1)$ negative eigenvalues and it follows that $ADA^T$ has 1 positive eigenvalue and $(n-1)$ negative eigenvalues.
\end{proof}

Notice that the desired congruences were obtained by "eliminating in from the pendents": a base vertex (in the case of the star, the central one) was chosen to be $v_1$, and then all other vertices were numbered arbitrarily. Then, the congruences were obtained by pulling back the pendents; the first row and column were subtracted from the row and column corresponding to the pendent vertex. 

It turns out that this idea still works when the tree has longer paths. We formalize this in the following theorem:

\begin{thm}\label{weightedtree}
The distance matrix of a weighted tree on $n$ vertices is strongly unipositive, that is to say, has 1 positive eigenvalue and $n-1$ negative eigenvalues.
\end{thm}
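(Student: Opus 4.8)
The plan is to realize the entire reduction to the advertised normal form as a single congruence determined by the tree's parent structure, rather than as a long sequence of row/column operations. First I would root $T$ at an arbitrary vertex $v_1$ and relabel the vertices by a breadth- or depth-first search from $v_1$, so that every non-root vertex $v_i$ has a unique parent $v_{p(i)}$ with $p(i)<i$; write $w_i$ for the weight of the edge $\{v_i,v_{p(i)}\}$. The operation ``eliminate from the pendants'' amounts to subtracting, for each $i\ge 2$, the $p(i)$-th row and column from the $i$-th row and column, processing leaves before their parents. Rather than track how the intermediate entries evolve, I would observe that the net congruence is implemented by the single unit lower-triangular matrix $S$ defined by $S_{ii}=1$, $S_{i,p(i)}=-1$ for $i\ge 2$, and zeros elsewhere (processing children before parents is exactly what makes the product of the elementary matrices telescope to this $S$). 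Since $S$ is triangular with unit diagonal it is nonsingular, so by Sylvester's Law of Inertia it suffices to compute $SDS^T$ and read off its inertia.

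Next I would compute $SDS^T$ entrywise, using that row $1$ of $S$ is $e_1^T$ and row $i$ (for $i\ge 2$) is $(e_i-e_{p(i)})^T$. The first-row and diagonal entries are immediate from the defining property of a tree metric that $d(v_1,v_i)=d(v_1,v_{p(i)})+w_i$: one gets $(SDS^T)_{11}=D_{11}=0$, then $(SDS^T)_{1i}=D_{1,i}-D_{1,p(i)}=w_i$, and $(SDS^T)_{ii}=D_{ii}-2D_{i,p(i)}+D_{p(i),p(i)}=-2w_i$ (using $D_{i,p(i)}=w_i$). This already reproduces the advertised normal form along its first row, first column, and diagonal.

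The crux — and the step that genuinely uses that the graph is a tree rather than a star — is showing that the remaining off-diagonal entries vanish, i.e. that for distinct $i,j\ge 2$,
\[
(SDS^T)_{ij}=D_{ij}-D_{i,p(j)}-D_{p(i),j}+D_{p(i),p(j)}=0.
\]
I would prove this as a four-point identity for tree metrics. Fixing the edge $\{v_j,v_{p(j)}\}$, consider $g(x)=d(x,v_j)-d(x,v_{p(j)})$; since deleting this edge splits $T$ into the subtree rooted at $v_j$ and its complement, $g$ is constantly $-w_j$ on the former and $+w_j$ on the latter. The entry above equals $g(v_i)-g(v_{p(i)})$, which is nonzero only if the adjacent pair $v_i,v_{p(i)}$ straddles the cut defined by $\{v_j,v_{p(j)}\}$ — and two adjacent vertices lie on opposite sides of a cut only when the edge between them is the cut edge itself. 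As $i\ne j$ determine distinct parent-edges (each non-root vertex names a unique edge to its parent, and the rooting forbids $v_i,v_j$ being each other's parents), $v_i$ and $v_{p(i)}$ lie on the same side, so $g(v_i)=g(v_{p(i)})$ and the entry is $0$. This is the one place where uniqueness of paths — equivalently, that every edge is a cut — is indispensable, so I expect stating it cleanly to be the main obstacle; everything else is bookkeeping.

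Finally, with $SDS^T$ equal to the normal form, I would conclude exactly as in the star lemma. By the Cauchy Interlacing Theorem (Theorem~\ref{Cauchy}), the leading $2\times 2$ block $\begin{bmatrix} 0 & w_2 \\ w_2 & -2w_2 \end{bmatrix}$ has negative determinant and hence eigenvalues of opposite sign, forcing at least one positive eigenvalue, while the trailing $(n-1)\times(n-1)$ block is the diagonal matrix $\operatorname{diag}(-2w_2,\ldots,-2w_n)$ with strictly negative entries, forcing at least $n-1$ negative eigenvalues. These totals already account for all $n$ eigenvalues, so the inertia of $SDS^T$ is exactly $(1,0,n-1)$, and by Sylvester's Law of Inertia $D(T)$ has the same inertia. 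I would note that positivity of the edge weights is what makes the trailing block negative definite, an assumption already implicit in the star case.
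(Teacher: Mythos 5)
Your proposal is correct, and it reaches the paper's normal form by a genuinely cleaner route than the paper does, even though the overall skeleton (congruence to the normal form, then Sylvester plus Cauchy interlacing) is identical. The paper performs the same elimination ``in from the pendants,'' but it verifies the resulting form by induction on the maximum unweighted distance from $v_1$, peeling off the deepest layer of leaves at each step and tracking how the lower-left and upper-right blocks get zeroed out; this forces it to choose a particular labeling (non-decreasing distance from an endpoint of a longest path) and to carry a fair amount of block bookkeeping. You instead observe that the whole product of elementary matrices telescopes into the single unit lower-triangular matrix $S$ with $S_{i,p(i)}=-1$ (your commutation argument is right: since children are processed before parents, the cross terms $E_{i,p(i)}E_{j,p(j)}$ all vanish), and then you verify the normal form entrywise via the four-point identity
\[
D_{ij}-D_{i,p(j)}-D_{p(i),j}+D_{p(i),p(j)}=g(v_i)-g(v_{p(i)})=0,
\]
proved by the cut argument on the edge $\{v_j,v_{p(j)}\}$. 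This buys several things: it eliminates the induction entirely, it works for an arbitrary rooting and any child-before-parent labeling, and it isolates exactly where tree-ness enters (every edge is a cut, and distinct indices $i\neq j$ name distinct parent edges), which the paper's computation leaves implicit. What the paper's iterative formulation buys in exchange is that its algorithmic structure is reused verbatim for the unicyclic 3-cycle theorem later (peel leaves until only the cycle remains, then handle the residual core); your one-shot congruence also adapts to that setting --- the four-point identity fails only for the pair $(2,3)$ on the triangle, producing precisely the $d_{23}-d_{12}-d_{13}$ entry of their normal form --- but you would need to say so explicitly. Your concluding interlacing step, including the remark that positive weights are what make the trailing diagonal block negative definite, matches the paper's argument exactly and is correct.
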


\begin{proof}
Let $T$ be a weighted tree on $n$ vertices. First, we define an ordering on the vertices. Consider a maximum unweighted path of T and let one of its end points be $v_1$. Label the rest of the vertices in the graph in an non-decreasing order of unweighted path length to $v_1$.
We can now construct the distance matrix $D=[d_{ij}]$, where $d_{ij}$ is the weighted path distance between vertices $i$ and $j$. Now, consider the matrix:
\begin{flalign*}
A_{ij}=I-E(i,j)
\end{flalign*}
i.e. $A_{ij}$ is the matrix with 1's on the diagonal, -1 in the ij'th position, and 0's everywhere else. Clearly, each $A_{ij}$ is non-singular. Note that $A_{ij}DA_{ij}^T$ subtracts the $j$th row from the $i$th row and subtracts the $j$th column from the $i$th column. Now, we have a chain of congruences on $D$ as follows:

Suppose that $T$ has $m$ leaves excluding $v_1$. For each leaf $v_i$, $n-m < i \leq n$, define $a(i)$ as the function which maps the index of a leaf to the index of vertex the leaf is adjacent to. Consider
\begin{flalign*}
A=\prod_{i~\mid~ v_i \text{ leaf}}A_{i,a(i)}
\end{flalign*}
where $A_{ij}$ is defined as in Lemma 1. Let $D_1=ADA^T$. Let $T_1$ be the weighted tree formed from deleting those $m$ leaves from $T$. Now, $T_1$ has $m_1$ leaves and this process may be applied to form $D_2 = A_1D_1A_1^T$, where $A_1$ is the matrix analogous to $A$ for $T_1$. Repeat this until $T_k = v_1$. Then, we claim that $D_k$ is of the form:
\[
\begin{bmatrix} 
    0 & d_1 & d_2 & \dots & d_{n-1}\\
    d_1 & -2d_1 & 0 & \dots & 0\\
    d_2 & 0 & -2d_2 & \dots & 0\\
    \vdots & \vdots & \vdots & \ddots & \vdots\\
    d_{n-1} & 0 & 0 & \dots & -2d_{n-1} 
    \end{bmatrix}
\]
where $d_i$ is the weight on the edge between $v_{i+1}$ and the adjacent vertex which has a smaller path distance to $v_1$. The same argument applied in Lemma 1 shows that this matrix, which is congruent to $D$, has 1 positive eigenvalue and (n-1) negative eigenvalues. Thus, so does $D$.

We will prove that $D_k$ is indeed of this given form through induction on the maximum unweighted path length of a weighted tree. Assume that this algorithm gives the desired form for any weighted tree such that every vertex has a path distance of $b$ or less from $v_1$. (Note that our base case, when $b=1$, means that the tree is a star, the case proven in Lemma 1.) Now, let us prove that this implies that our algorithm works for any weighted tree such that every vertex has a path distance of $b+1$ or less from $v_1$.
Let $T$ be such a weighted tree on n vertices with a distance matrix $D$. Moreover, let $L$ be the set of leaves of path distance $b+1$ from $v_1$. Given the ordering used in our algorithm, if there are $l$ such leaves, these are the points $v_{n+1-l}, v_{n+2-l},...,v_n$. Note that for a leaf $v_i$, its distance from any other point $v_j$ (except itself), is $d_{i,a(i)}+d_{a(i),j}$. Let us now define
\begin{flalign*}
A'=\prod_{i~\mid~ v_i \in L}A_{i,a(i)}~.
\end{flalign*}
Of course, $A'$ is a factor of the matrix $A$ which is used in the first step of the algorithm and it will subtract the $a(i)$th row from the $i$th row for every $v_i\in L$. If we apply $A'$ to $D$, then the $i$th row of $D$ will be filled with $d_{i,a(i)}$ except the $ii$th entry which will be $-d_{i,a(i)}$, for $n-l<i\leq n$ (the notation $d_{i,a(i)}=d_{i-1}$ will be used for legibility):
\[
A'D=\begin{bmatrix} 
    0 & d_{12} & \dots & d_{1,n+1-l} & d_{1,n+2-l} & \dots  & d_{1n}\\
    d_{12} & 0 & \dots & d_{2,n+1-l} & d_{2,n+2-l} & \dots & d_{2n} \\
    \vdots & \vdots & \ddots & \vdots & \vdots & \ddots & \vdots \\
    d_{n-l} & d_{n-l} & \dots & -d_{n-l} & d_{n-l} & \dots & d_{n-l}\\
    d_{n+1-l} &  d_{n+1-l} & \dots &  d_{n+1-l} &  -d_{n+1-l} & \dots &  d_{n+1-l} \\
    \vdots & \vdots & \ddots & \vdots & \vdots & \ddots & \vdots \\
    d_{n-1} & d_{n-1} & \dots & d_{n-1} & d_{n-1} & \dots & -d_{n-1}
    \end{bmatrix}
\]
Now, it is time to apply $A'^T$, which subtracts the $a(i)$th column from the $i$th column. Applying this to $A'D$, and using our familiar notation, we end up with:
\[
A'DA'^T=\begin{bmatrix} 
    0 & d_{12} & \dots & d_{1,n+1-l} & d_{1,n+2-l} & \dots  & d_{1n}\\
    d_{12} & 0 & \dots & d_{n-l} & d_{n+1-l} & \dots & d_{n-1} \\
    \vdots & \vdots & \ddots & \vdots & \vdots & \ddots & \vdots \\
    d_{n-l} & d_{n-l} & \dots & -2d_{n-l} & 0 & \dots & 0\\
    d_{n+1-l} &  d_{n+1-l} & \dots &  0 &  -2d_{n+1-l} & \dots &  0 \\
    \vdots & \vdots & \ddots & \vdots & \vdots & \ddots & \vdots \\
    d_{n-1} & d_{n-1} & \dots & 0 & 0 & \dots & -2d_{n-1}
    \end{bmatrix}
\]
Next, it is time to consider the upper left $(n-l)$x$(n-l)$ principal submatrix which has so far been left untouched. Notice that this submatrix is the distance matrix associated with the subtree of $T$ composed of the vertices with path distance $b$ or less from $v_1$. However, it is our inductive hypothesis that our algorithm returns the desired form for any weighted tree of vertices with path distance $b$ or less from a given $v_1$. So we may use our algorithm to ensure that the upper left block is in the correct form. Now, let us consider the impact of doing this on the matrix $D$ as a whole. Since the process will only subtract from rows and columns with indices less than or equal to $n-l$, the lower right $l$x$l$ will not change. However, note that every column in the lower left $l$x$(n-l)$ is the same. Thus, whenever we subtract the $a(i)$th column from the $i$th column for $1<i\leq n-l$, the $i$th column in this lower left block is zeroed. Similarly, every row in the upper right $(n-l)$x$l$ is the same. Thus, whenever we subtract the $a(i)$th row from the $i$th row for $1<i\leq n-l$, the $i$th row in this upper right block is zeroed. Our algorithm subtracts exactly one row from each row except for the first and subtracts exactly one column from each column except the first. Thus, in the upper right and lower left blocks, every entry is 0 except for those in the first row or column of $D$ once our algorithm on the upper left $(n-l)$x$(n-l)$ principal submatrix finishes: 
\[
D_k=\begin{bmatrix} 
    0 & d_{1} & \dots & d_{n-l} & d_{n+1-l} & \dots  & d_{1n}\\
    d_{1} & -2d_1 & \dots & 0 & 0 & \dots & 0 \\
    \vdots & \vdots & \ddots & \vdots & \vdots & \ddots & \vdots \\
    d_{n-l} & 0 & \dots & -2d_{n-l} & 0 & \dots & 0\\
    d_{n+1-l} & 0 & \dots &  0 &  -2d_{n+1-l} & \dots &  0 \\
    \vdots & \vdots & \ddots & \vdots & \vdots & \ddots & \vdots \\
    d_{n-1} & 0 & \dots & 0 & 0 & \dots & -2d_{n-1}
    \end{bmatrix}
\]
Since this is exactly the form we claimed and the case $b=1$ works, we have proven that the distance matrix of a weighted tree of arbitrary size has 1 positive eigenvalue and (n-1) negative eigenvalues.
\end{proof}

As stated above, this method of congruences can be extended to show that more complicated graphs are also strongly unipositive. In these cases, the "normal form" that we receive after congruence is slightly more complicated, so more sophisticated machinery is needed to calculate the determinant. We showcase the required theorem below:

\begin{thm}
\textbf{(Schur Determinant Formula)} Let $M$ be a square matrix that can be partitioned in the form:
\[
M=\begin{bmatrix}
A & B \\
C & D \\
\end{bmatrix}
\]
Then, if $A$ is non-singular:
\[
det(M)=det(A)det(D-CA^{-1}B)
\]
\end{thm}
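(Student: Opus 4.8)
The plan is to prove this identity by exhibiting an explicit block factorization of $M$ into triangular pieces whose determinants are transparent, and then invoking the multiplicativity of the determinant. Since $A$ is assumed non-singular, $A^{-1}$ exists and the \emph{Schur complement} $D - CA^{-1}B$ is well-defined; this is precisely the quantity we expect to see in the final formula.

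First I would verify the block factorization
\[
\begin{bmatrix} A & B \\ C & D \end{bmatrix}
= \begin{bmatrix} I & 0 \\ CA^{-1} & I \end{bmatrix}
\begin{bmatrix} A & B \\ 0 & D - CA^{-1}B \end{bmatrix},
\]
which follows by a direct block multiplication: the top block row is reproduced verbatim, the $(2,1)$ block of the product is $CA^{-1}A = C$ (this is where non-singularity of $A$ is used), and the $(2,2)$ block is $CA^{-1}B + (D - CA^{-1}B) = D$. This step is the key idea, but it is purely mechanical.

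Next I would take the determinant of both sides and apply $\det(XY) = \det(X)\det(Y)$. The first factor is block lower triangular with identity blocks on the diagonal, so its determinant is $1$; the second factor is block upper triangular, so its determinant is $\det(A)\det(D - CA^{-1}B)$. Multiplying these yields $\det(M) = \det(A)\det(D - CA^{-1}B)$, as claimed.

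The one supporting fact that must be in place is the statement that the determinant of a block triangular matrix equals the product of the determinants of its diagonal blocks. I would establish this as a separate lemma, for instance by a Laplace expansion along the rows lying in the zero block, or by induction on the size of one diagonal block. This is really the only step requiring genuine justification, so I expect it — rather than the factorization itself — to be the main (and quite mild) obstacle; everything else reduces to routine block arithmetic and the standard multiplicativity of determinants.
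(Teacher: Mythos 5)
Your argument is correct, but there is nothing in the paper to compare it with: the paper states the Schur determinant formula \emph{without proof}, as quoted background machinery (on par with Sylvester's Law of Inertia and the Cauchy Interlacing Theorem), and only uses it later to evaluate determinants of its congruence ``normal forms.'' Your proof is the standard one. The factorization
\[
\begin{bmatrix} A & B \\ C & D \end{bmatrix}
= \begin{bmatrix} I & 0 \\ CA^{-1} & I \end{bmatrix}
\begin{bmatrix} A & B \\ 0 & D - CA^{-1}B \end{bmatrix}
\]
is verified correctly by block multiplication, multiplicativity of the determinant applies, and the determinants of the two factors are read off from the block-triangular determinant lemma, giving $1$ and $\det(A)\det(D-CA^{-1}B)$ respectively. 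You are also right to flag that lemma as the only step needing independent justification, and either of your suggested routes (Laplace expansion along the rows meeting the zero block, or induction on the size of a diagonal block) closes it. So your proposal stands as a complete proof of a statement the paper merely cites as a known tool.
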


We are now ready to state the main results, in the same manner as before (starting with the star, then moving to the general case). Once again, the proof boils down to calculating the determinant of the "normal form". For the star case, we induct on the number of pendent vertices of the star (with the base case being 3 pendent vertices, as any less is trivial).

\begin{lmm}\label{star4} The distance matrix for the star on 4 vertices + an edge has one positive eigenvalue and 3 negative eigenvalues.
\end{lmm}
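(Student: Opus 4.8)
The plan is to mirror the strategy of the star lemma and of Theorem~\ref{weightedtree}: eliminate the single pendant by a congruence to reach a ``normal form,'' and then read off the inertia from that normal form using its determinant together with an interlacing argument. I would label the graph so that $v_1, v_2, v_3$ are the vertices of the triangle and $v_4$ is the pendant vertex, adjacent to $v_1$. Write $p = d_{12}$, $q = d_{13}$, $r = d_{23}$ for the three pairwise distances inside the triangle and $e = d_{14}$ for the weight of the pendant edge. Since every path to $v_4$ must use the edge $v_1 v_4$, we have $d_{24} = p + e$ and $d_{34} = q + e$, so the distance matrix is
\[
D = \begin{bmatrix} 0 & p & q & e \\ p & 0 & r & p+e \\ q & r & 0 & q+e \\ e & p+e & q+e & 0 \end{bmatrix}.
\]
First I would apply the pendant congruence $A_{41} = I - E(4,1)$ exactly as before, so that $A_{41} D A_{41}^T$ subtracts row $1$ from row $4$ and column $1$ from column $4$. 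A short computation gives the normal form
\[
D' = A_{41} D A_{41}^T = \begin{bmatrix} T & e\mathbf{1} \\ e\mathbf{1}^T & -2e \end{bmatrix}, \qquad T = \begin{bmatrix} 0 & p & q \\ p & 0 & r \\ q & r & 0 \end{bmatrix},
\]
with $\mathbf{1} = (1,1,1)^T$. Unlike in the tree case, the couplings between $v_4$ and $v_2, v_3$ do \emph{not} vanish here (that cancellation relied on the absence of a cycle), which is exactly why the triangle block $T$ survives and the form is ``more complicated.'' By Sylvester's Law of Inertia it then suffices to determine the inertia of $D'$.

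Next I would extract what I can from the block $T$, the distance matrix of the weighted triangle. Since $\det T = 2pqr > 0$ and $\operatorname{tr} T = 0$, its three eigenvalues have positive product and zero sum, which forces inertia $(1,0,2)$. As $T$ is a principal submatrix of $D'$, the Cauchy Interlacing Theorem (Theorem~\ref{Cauchy}) gives $n_-(D') \ge n_-(T) = 2$ and $n_+(D') \ge 1$. To pin down the remaining eigenvalue I would compute $\det D'$. Because $T$ is nonsingular, the Schur Determinant Formula applies with scalar Schur complement $-2e - e^2\,\mathbf{1}^T T^{-1}\mathbf{1}$; evaluating $\mathbf{1}^T T^{-1}\mathbf{1}$ through the adjugate of $T$ yields
\[
\det D' = -4epqr + e^2\big(p^2 + q^2 + r^2 - 2pq - 2qr - 2rp\big).
\]

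The crux is to show this determinant is negative, and this is where the hypothesis that $p,q,r$ are graph distances (hence form a metric) is essential. Writing $Q = p^2+q^2+r^2 - 2pq - 2qr - 2rp = (q+r-p)^2 - 4qr$, the triangle inequalities $p \le q+r$ and $p \ge |q-r|$ give $0 \le q+r-p$ and $p \ge |q-r| \ge (\sqrt q - \sqrt r)^2$, whence $(q+r-p)^2 \le 4qr$ and therefore $Q \le 0$. For arbitrary positive $p,q,r$ the sign of $Q$ is indeterminate (e.g. it is positive when one quantity dominates the other two), so this metric input is genuinely needed; I expect this sign analysis to be the main obstacle. Granting it, $\det D' = -4epqr + e^2 Q < 0$, since the first term is strictly negative (all weights are positive) and the second is nonpositive. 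A negative determinant means $D'$ has no zero eigenvalue and an odd number of negative eigenvalues; combined with $n_-(D') \ge 2$ this forces exactly three negative and one positive eigenvalue. Hence $D$ has inertia $(1,0,3)$, as claimed.
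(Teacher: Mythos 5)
Your proof is correct, but it takes a genuinely different route from the paper's. The paper clears row and column $1$ from \emph{all} of rows/columns $2,3,4$, reaching an arrow-like normal form whose only surviving coupling is $z=d_{23}-d_{12}-d_{13}$; it gets one positive eigenvalue by interlacing with the leading $2\times 2$ block and handles the trailing $3\times 3$ block through its determinant, using $|z|<2\min(d_{12},d_{13})$. You eliminate only the pendant, keep the triangle's distance matrix $T$ intact as a principal block, read off $n(T)=(1,0,2)$ from trace and determinant, interlace, and pin down the last eigenvalue from the sign of the full $4\times 4$ determinant via the Schur complement, with the metric inequality $(q+r-p)^2\le 4qr$ playing the role that $|z|<2\min(d_{12},d_{13})$ plays in the paper. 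Your route buys two things. First, the redundant-edge case $r=p+q$ is absorbed uniformly, whereas the paper splits it off as a separate reduction to the simple-star lemma. Second, you bypass the paper's shakiest step: to exclude inertia $(2,0,1)$ for its $3\times 3$ block, the paper asserts that block is congruent to the hollow, nonnegative lower-right $3\times 3$ block of $D$, which is false as stated (that hollow block has inertia $(1,0,2)$, while the paper's block is in fact negative definite --- being block diagonal with a negative definite $2\times 2$ block and a negative scalar --- which is the direct argument the paper could have used); your interlacing-plus-determinant argument needs no such claim. What the paper's form buys instead is reusability: its arrow-shaped normal form is exactly what the induction in Lemma \ref{weightedstaredge} and the general $3$-cycle theorem build on, while your bordered form does not iterate as directly. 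One minor quibble: your parenthetical that the couplings $e\mathbf{1}$ survive ``because of the cycle'' is inaccurate --- a lone pendant elimination leaves such couplings in the tree case too; under the paper's full elimination the pendant couplings vanish in both cases, and what the cycle leaves behind is the $(2,3)$ coupling $z$. This aside does not affect the validity of your argument.
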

\begin{proof}
As above, let the center vertex of the star be $v_1$, However,in this case, let $v_2$, $v_3$ be the two vertices of the original star connected by the extra edge. Now construct the matrix $D = [d_{ij}]$ such that $d_{ij}$ is the weighted path distance between vertices $i$ and $j$. Note that in a simple star, the weighted path distance between two non-central vertices $v_i$ and $v_j$ is $d_{1i}+d_{1j}$. If $d_{23}$ is redundant, then the distance matrix is the same as for a simple star and the result follows by the above. Thus, we assume that $d_{23}$ is not redundant and we have:
\[
D =\begin{bmatrix} 
    0 & d_{12} & d_{13} & d_{14}\\
    d_{12} & 0 & d_{23} & d_{12}+d_{14}\\
    d_{13} & d_{23} & 0 & d_{13}+d_{14}\\
    d_{14} & d_{14}+d_{12} & d_{14}+d_{13} & 0 
    \end{bmatrix}
\]
Now, consider the matrix:
\begin{flalign*}
A_{ij}=I-E(i,j)
\end{flalign*}
i.e. $A_{ij}$ is the matrix with 1's on the diagonal, -1 in the ij'th position, and 0's everywhere else. Clearly, each $A_{ij}$ is non-singular, so any product of $A_{ij}$ is also non-singular Note that $A_{ij}B$ subtracts the $j$th row from the $i$th row of $B$ and $BA_{ij}^T$ subtracts the $j$th column from the $i$th column of $B$.
Let $A=\prod_{k=1}^3A_{k1}$. By applying the same congruence as above, we obtain:
\[
ADA^T =\begin{bmatrix} 
    0 & d_{12} & d_{13} & d_{14}\\
    d_{12} & -2d_{12} & d_{23}-d_{12}-d_{13} & 0\\
    d_{13} & d_{23}-d_{12}-d_{13} & -2d_{13} & 0\\
    d_{14} & 0 & 0 & -2d_{14}
    \end{bmatrix}
\]
Clearly, $D$ is congruent to $ADA^T$.

Thus, by Sylvester's Law of Inertia, it suffices to show that $ADA^T$ has 1 positive eigenvalue and 3 negative eigenvalues. As above, this is done via an interlacing argument: firstly, notice that the 2 x 2 leading principal submatrix of $ADA^T$ has eigenvalues $(2\sqrt{2}-1)d_{12}, -(2\sqrt{2}+1)d_{12}$. Thus, by the Cauchy Interlacing Theorem, it follows that $ADA^T$ has at least 1 positive eigenvalue. Now, consider the lower right 3 x 3 submatrix of $ADA^T$ (call this $B$). It suffices to show that all eigenvalues of this matrix are negative. To show this, it suffices to show that the determinant of the entire matrix is negative. A straight determinant calculation shows that this is the case: let $z=d_{23}-d_{12}-d_{13}$. Notice that as $d_{23} \in (|d_{12}-d_{13}|,d_{12}+d_{13})$, it easily follows that $|z| \in (0, 2min(d_{12},d_{13}))$. Then:
\[
det(B)=-2d_{13}(4d_{12}d_{13}-z^2) < 0 \iff (4d_{12}d_{13}-z^2) > 0
\].
However, this is easily seen via the Intermediate Value Theorem, as $4d_{12}d_{23} > 0$ and $4d_{12}d_{13}-(2min(d_{12},d_{13}))^2 > 0$, and $f(z)=4d_{12}d_{13}-z^2$ is a strictly decreasing continuous function on $(0, 2min(d_{12},d_{13}))$. Thus, the determinant is negative and $B$ has either 3 negative eigenvalues or 2 positive eigenvalues and 1 negative eigenvalue. However, if $B$ has 2 positives and 1 negative, this is a contradiction: as $B$ is congruent to a hollow symmetric nonnegative matrix (namely, the lower right 3 x 3 submatrix of D), this is impossible (if it were the case, we would have that the negative eigenvalue had the largest absolute value, a contradiction to Perron-Frobenius). 

Thus, by the Cauchy Interlacing Theorem, it follows that $ADA^T$ has 3 negative eigenvalues. Thus, $ADA^T$ has 1 positive eigenvalue and 3 negative eigenvalues and it follows that $ADA^T$ has 1 positive eigenvalue and 3 negative eigenvalues. 
\end{proof}

As the base case has been established, we can now use induction to show the result for a star with any number of vertices + an edge.

\begin{lmm}\label{weightedstaredge}
The distance matrix of a simple star with a single additional edge between two non-central vertices is strongly unipositive.
\end{lmm}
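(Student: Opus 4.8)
The plan is to run the congruence argument of Lemma~\ref{star4} in full generality, reducing $D$ to a normal form whose inertia is transparent. Label the center $v_1$ and let $v_2,v_3$ be the two non-central vertices joined by the extra edge, with the remaining pendants $v_4,\dots,v_n$ ordered arbitrarily. The first thing to verify is that the single added edge perturbs only the $v_2$--$v_3$ entry of the distance matrix. Since the edge is non-redundant we have $d_{23}\in(|d_{12}-d_{13}|,\,d_{12}+d_{13})$ (these are just the triangle inequalities among the shortest-path distances from $v_1$), and from the lower bound $d_{23}>|d_{12}-d_{13}|$ one checks that no shortest path from $v_2$ or $v_3$ to any other pendant $v_j$ is rerouted through the new edge, since routing $v_2\to v_3\to v_1\to v_j$ costs $d_{23}+d_{13}+d_{1j}\ge d_{12}+d_{1j}$. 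Hence $D$ coincides with the distance matrix of the plain star everywhere except in positions $(2,3)$ and $(3,2)$, which now carry $d_{23}$ instead of $d_{12}+d_{13}$.

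Next I would apply the very same congruence $A=\prod_{k=2}^n A_{k1}$ used for the star, subtracting the first row and column from every other row and column. The computation is identical to that in Lemma~1 except in the $(2,3)$ slot, producing the normal form
\[
ADA^T = \begin{bmatrix}
0 & d_{12} & d_{13} & d_{14} & \cdots & d_{1n} \\
d_{12} & -2d_{12} & z & 0 & \cdots & 0 \\
d_{13} & z & -2d_{13} & 0 & \cdots & 0 \\
d_{14} & 0 & 0 & -2d_{14} & \cdots & 0 \\
\vdots & \vdots & \vdots & \vdots & \ddots & \vdots \\
d_{1n} & 0 & 0 & 0 & \cdots & -2d_{1n}
\end{bmatrix},
\qquad z = d_{23}-d_{12}-d_{13},
\]
which is congruent to $D$ and so shares its inertia by Sylvester's law of inertia.

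The inertia is then read off by two interlacing arguments, exactly as in the star case. For the single positive eigenvalue, the leading $2\times 2$ principal submatrix $\left[\begin{smallmatrix}0 & d_{12}\\ d_{12} & -2d_{12}\end{smallmatrix}\right]$ has determinant $-d_{12}^2<0$, hence one positive eigenvalue, so the Cauchy Interlacing Theorem (Theorem~\ref{Cauchy}) forces $ADA^T$ to have at least one positive eigenvalue. For the negatives I would examine the trailing $(n-1)\times(n-1)$ principal submatrix $B$. The key structural observation -- the reason the general case is no harder than $n=4$ -- is that $B$ is block diagonal: a $2\times 2$ block $\left[\begin{smallmatrix} -2d_{12} & z \\ z & -2d_{13}\end{smallmatrix}\right]$ together with the negative diagonal $\mathrm{diag}(-2d_{14},\dots,-2d_{1n})$. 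Thus $B$ is negative definite precisely when that $2\times 2$ block is, i.e. when $4d_{12}d_{13}-z^2>0$; this is exactly the inequality established in Lemma~\ref{star4} from $|z|\in(0,2\min(d_{12},d_{13}))$ via the intermediate value theorem. Negative definiteness of $B$ together with interlacing then hands $ADA^T$ at least $n-1$ negative eigenvalues, and with the positive one already found the inertia must be $(1,0,n-1)$.

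The only genuine obstacle is the bookkeeping of the first paragraph: confirming that adding one edge leaves every shortest-path distance unchanged except $d_{23}$, so that the congruence really does reproduce the star normal form with a single perturbed entry. Once that is in place, the block-diagonal structure of $B$ makes the definiteness question collapse to the same $2\times 2$ determinant for every $n$, so the induction on the number of pendants that the surrounding text anticipates is essentially automatic. If one prefers to carry out that induction explicitly, the alternative is to delete $v_n$ so that the $(n-1)$-vertex case appears as a principal submatrix, invoke interlacing to secure one positive and $n-2$ negative eigenvalues, and then pin down the last eigenvalue by computing $\det(ADA^T)$ with the Schur determinant formula relative to the nonsingular block $B$: the Schur complement equals $-\mathbf{d}^T B^{-1}\mathbf{d}$, which is positive because $B$ is negative definite, giving $\det(ADA^T)$ the sign $(-1)^{n-1}$ and thereby ruling out $(2,0,n-2)$ in favor of $(1,0,n-1)$.
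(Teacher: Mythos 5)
Your proof is correct, and it takes a genuinely different --- and more direct --- route than the paper's. The paper proves this lemma by induction on $n$: it adds one pendant at a time, treats the leading $n\times n$ block $X$ (congruent to the smaller graph's distance matrix) via the inductive hypothesis, and then determines the sign of $\det(ADA^T)$ through the Schur determinant formula, an adjugate computation of the entry $X^{-1}_{11}$, and a case split on the parity of $n$. You dispense with induction entirely: after the same congruence to the same normal form, you observe that the trailing $(n-1)\times(n-1)$ principal submatrix $B$ is block diagonal --- the $2\times 2$ block with diagonal entries $-2d_{12},\,-2d_{13}$ and off-diagonal $z$, together with $\mathrm{diag}(-2d_{14},\dots,-2d_{1n})$ --- so $B$ is negative definite precisely when $4d_{12}d_{13}-z^2>0$, the inequality already isolated in Lemma~\ref{star4}; one application of Theorem~\ref{Cauchy} to $B$ and one to the leading $2\times 2$ block then give the inertia $(1,0,n-1)$ for every $n$ at once. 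This buys several simplifications: no parity cases, no adjugate, and no Perron--Frobenius argument (which the paper's base case Lemma~\ref{star4} invokes to exclude two positive eigenvalues of $B$, and which your negative-definiteness observation renders unnecessary); it also applies verbatim to the identical normal form that reappears in the paper's main theorem on unicyclic graphs with a $3$-cycle, where the paper again defers to the argument of this lemma. Your closing alternative (delete $v_n$, interlace, and take the Schur complement relative to the nonsingular block $B$ rather than $X$) is essentially the paper's inductive skeleton, but pivoting on the negative definite block is exactly what eliminates the sign bookkeeping. The one step to write out carefully is the first-paragraph claim that the extra edge perturbs only the $(2,3)$ entry; the cleanest justification is that each $v_j$ with $j\ge 4$ is a pendant, so every path leaving $v_j$ passes through $v_1$, whence $d_{ij}=d_{1i}+d_{1j}$ whenever $i\neq j$ and $\{i,j\}\neq\{2,3\}$, regardless of how the extra edge may reroute $d_{12}$, $d_{13}$, or $d_{23}$ themselves.
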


\begin{proof}
We proceed by induction on the number of vertices of the star.

Consider the base case: n=4. (The cases n=1,2,3 are not so interesting and can be easily shown in other elementary methods). Then, it follows exactly by Lemma \ref{star4} above. Now, assume that the statement is true for the star on $n$ vertices. We prove it for the star on $(n+1)$ vertices. Assign $v_1, ..., v_n$ as follows: let $v_1$ be the center of the star on $n$ vertices, $v_2, v_3$ be the vertices connected by the extra edge, all other vertices may be chosen as desired. Let $v_{n+1}$ be the new vertex added to the tree+edge on $n$ vertices. Let $D$ be the distance matrix of the graph on $n+1$ vertices, $D'$ be the distance matrix of the graph of the tree+edge on $n$ vertices. Then, after congruences (eliminating in from the pendents as above) we obtain:
\[
ADA^T=\begin{bmatrix} 
    0 & d_{12} & d_{13} & d_{14} & \dots  & d_{1n+1}\\
    d_{12} & -2d_{12} & d_{23}-d_{12}-d_{13} & 0 & \dots & 0 \\
    d_{13} & d_{23}-d_{12}-d_{13} & -2d_{13} & 0 & \dots & 0\\
    d_{14} & 0 &  0 &  -2d_{14} & \dots &  0 \\
    \vdots & \vdots & \vdots & \vdots & \ddots & \vdots \\
    d_{1n+1} & 0 & 0 & 0 & \dots & -2d_{1n+1}
    \end{bmatrix}
\]
Notice that the upper $n\times n$ principal submatrix of $ADA^T$-call this $X$-is exactly congruent to $D'$. Thus, it has one positive eigenvalue and (n-1) negative eigenvalues. We break following argument into two cases: when n is even, and when n is odd.

Firstly, assume that $n$ is even (so that the determinant of $D'$, which has the same sign as the determinant of $X$, is negative). Thus, by interlacing, $ADA^T$ has at least one positive eigenvalue and (n-1) negative eigenvalues and it suffices to show that $det(ADA^T)$ is positive. This is done via the Schur complement: notice that we can express $ADA^T$ as a block matrix:
\[
ADA^T=\begin{bmatrix}
    X & Y \\
    Z & L \\
\end{bmatrix}
\]
where $X$ is as above, $Z=\begin{bmatrix}d_{1n+1} & 0 & \dots & 0 \end{bmatrix}$, $Y=Z^T$, and $L=\begin{bmatrix} -2d_{1n+1} \end{bmatrix}$. Via the Schur complement formula, we have:
\[
det(ADA^T)=det(X)det(L-ZX^{-1}Y)
\]
(this holds as long as $X$ is nonsingular, but we assume that $det(X) \neq 0$ via the induction hypothesis). As $det(X) < 0$, $det(ADA^T) > 0$ $\iff$ $det(L-ZX^{-1}Y) < 0$. It suffices to show that $ZX^{-1}Y > 0 \iff d_{1n+1}^2X^{-1}_{11} > 0 \iff X^{-1}_{11} > 0$. Further, we have a formula for $X^{-1}$ given via the adjugate:
\[
X^{-1}=\frac{1}{det(X)}Adj(X)^T \implies X^{-1}_{11}=\frac{1}{det(X)}Adj(X)_{11}
\]
Thus, $X^{-1}_{11} > 0 \iff Adj(X)_{11} < 0$. However, this is easily seen to be true: as n is assumed to be even, n-1 is odd and it suffices to show that the determinant of $K$=the $(n-1)\times (n-1)$ lower right submatrix of $X$=the $(n-1)\times (n-1)$ central submatrix of $ADA^T$ is negative.
\[
K=\begin{bmatrix} 
    -2d_{12} & d_{23}-d_{12}-d_{13} & 0 & \dots & 0 \\
    d_{23}-d_{12}-d_{13} & -2d_{13} & 0 & \dots & 0\\
    0 &  0 &  -2d_{14} & \dots &  0 \\
    \vdots & \vdots & \vdots & \ddots & \vdots \\
    0 & 0 & 0 & \dots & -2d_{1n}
    \end{bmatrix}
\]
Notice that this is clearly negative: indeed, by taking the determinant by expanding along the last column, we obtain that $det(J)=-2d_{1n-1}...-2d_{14}(4d_{12}d_{13}-z^2)$, where $z$ is defined as in the lemma above. Note that since $n$ is even, $n-1-2=n-3$ is odd and the determinant is the product of an odd number of negative values and a positive value and is thus negative.

Now, assume $n$ is odd (so that the determinant of $X$ is positive). Then, it suffices to show that $det(ADA^T)$ is negative. As above, this is done via the Schur complement: notice that we can express $ADA^T$ as a block matrix:
\[
ADA^T=\begin{bmatrix}
    X & Y \\
    Z & L \\
\end{bmatrix}
\]
where $X$ is as above, $Z=\begin{bmatrix}d_{1n+1} & 0 & \dots & 0 \end{bmatrix}$, $Y=Z^T$, and $L=\begin{bmatrix} -2d_{1n+1} \end{bmatrix}$. Via the Schur complement formula, we have:
\[
det(ADA^T)=det(X)det(L-ZX^{-1}Y)
\]
As $det(X) > 0$, $det(ADA^T) < 0$ $\iff$ $det(L-ZX^{-1}Y) < 0$. It suffices to show that $ZX^{-1}Y > 0 \iff d_{1n+1}^2X^{-1}_{11} > 0 \iff X^{-1}_{11} > 0$. Further, we have a formula for $X^{-1}$ given via the adjugate:
\[
X^{-1}=\frac{1}{det(X)}Adj(X)^T \implies X^{-1}_{11}=\frac{1}{det(X)}Adj(X)_{11}
\]
Thus, $X^{-1}_{11} > 0 \iff Adj(X)_{11} > 0$. However, this is easily seen to be true: as n is assumed to be odd, n-1 is even and it suffices to show that the determinant of $K$=the $(n-1)\times (n-1)$ lower right submatrix of $X$=the $(n-1)\times (n-1)$ central submatrix of $ADA^T$ is positive.
\[
K=\begin{bmatrix} 
    -2d_{12} & d_{23}-d_{12}-d_{13} & 0 & \dots & 0 \\
    d_{23}-d_{12}-d_{13} & -2d_{13} & 0 & \dots & 0\\
    0 &  0 &  -2d_{14} & \dots &  0 \\
    \vdots & \vdots & \vdots & \ddots & \vdots \\
    0 & 0 & 0 & \dots & -2d_{1n}
    \end{bmatrix}
\]
Notice that this is clearly positive: indeed, by taking the determinant by expanding along the last column, we obtain that $det(J)=-2d_{1n-1}...-2d_{14}(4d_{12}d_{13}-z^2)$, where $z$ is defined as in the lemma above. Note that since $n$ is odd, $n-1-3=n-3$ is even and the determinant is the product of an even number of negative values and a positive value and is thus positive. The proof is complete.
\end{proof}

Similarly to the case for trees, it is easy to add pendent vertices to the star case once it has been established. Thus, we have the main result:
\begin{thm}
The distance matrix of any weighted unicyclic graph on n vertices such that the cycle is a 3-cycle is strongly unipositive.
\end{thm}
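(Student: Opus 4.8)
The approach is to adapt the peeling congruence of Theorem~\ref{weightedtree} so that it collapses every tree hanging off the triangle, leaving a small ``normal form'' built around the $3\times 3$ distance matrix of the cycle, and then to read off the inertia of that normal form. First I would dispose of a degenerate case: if one of the three cycle edges, say $v_2v_3$, is redundant (its weight is at least the length of the detour through the third cycle vertex), then no shortest path uses it, $D$ coincides with the distance matrix of the tree obtained by deleting that edge, and the claim follows immediately from Theorem~\ref{weightedtree}. So I may assume all three cycle edges are genuine shortest paths; writing $a=d_{12}$, $b=d_{13}$, $c=d_{23}$ for the cycle distances, this means $a,b,c$ satisfy the strict triangle inequalities.

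Now label the three cycle vertices $v_1,v_2,v_3$ and the remaining vertices in non-decreasing order of unweighted distance to the cycle, and ``eliminate in from the pendants'' exactly as in Theorem~\ref{weightedtree}. At the moment a non-cycle vertex $v_i$ is peeled it is a leaf whose unique neighbor $a(i)$ lies on every shortest path leaving $v_i$, so $d_{ij}=d_{i,a(i)}+d_{a(i),j}$ for all $j\neq i$ and the congruence $A_{i,a(i)}(\cdot)A_{i,a(i)}^{T}$ turns row/column $i$ into the constant $d_{i,a(i)}$ with diagonal entry $-2d_{i,a(i)}$. Running the same depth induction as before, the one new feature is that the three cycle vertices are never peeled, so the ``connect-back'' cancellation (identical to the tree argument, since all columns of a peeled block are equal) kills every off-cycle interaction but leaves each peeled vertex attached, with weight equal to its parent-edge length, to all three cycle vertices. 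I therefore expect $D$ to be congruent to
\[
N=\begin{bmatrix} \Tc & \mathbf{1}\,d^{T}\\[2pt] d\,\mathbf{1}^{T} & -2\operatorname{diag}(d)\end{bmatrix},\qquad
\Tc=\begin{bmatrix}0&a&b\\ a&0&c\\ b&c&0\end{bmatrix},
\]
where $\mathbf{1}=(1,1,1)^{T}$ and $d=(d_4,\dots,d_n)^{T}>0$ lists the parent-edge weights of the non-cycle vertices. Proving that the algorithm really produces $N$ is, I expect, the main obstacle; the bookkeeping is a direct transcription of Theorem~\ref{weightedtree} with the $3\times 3$ block $\Tc$ in the role previously played by the single vertex $v_1$.

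Granting the normal form, a further round of congruences finishes the inertia count and keeps everything inside Sylvester's Law. Since $C=-2\operatorname{diag}(d)$ is nonsingular, conjugating $N$ by the unit-triangular matrix $\left[\begin{smallmatrix} I & -BC^{-1}\\ 0 & I\end{smallmatrix}\right]$ (with $B=\mathbf{1}d^{T}$) block-diagonalizes it as $\operatorname{diag}(S,\,C)$, where the Schur complement is
\[
S=\Tc-BC^{-1}B^{T}=\Tc+\gamma\,\mathbf{1}\mathbf{1}^{T},\qquad \gamma=\tfrac12\textstyle\sum_{i=4}^{n}d_i>0 .
\]
Because $C$ is negative definite it contributes inertia $(0,0,n-3)$, so by Sylvester's Law it remains only to show that the $3\times 3$ matrix $S$ has inertia $(1,0,2)$.

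For $S$ I would argue exactly as in Lemmas~\ref{star4} and~\ref{weightedstaredge}: a direct computation (or the matrix-determinant lemma) gives $\det S=2abc+\gamma\big(2(ab+bc+ca)-(a^{2}+b^{2}+c^{2})\big)$. The first term is positive, and since the strict triangle inequalities are equivalent to $(\sqrt b-\sqrt c)^{2}<a<(\sqrt b+\sqrt c)^{2}$---precisely the condition making $2(ab+bc+ca)-(a^{2}+b^{2}+c^{2})$ positive---the whole determinant is positive, so $S$ has no zero eigenvalue and an even number of negative ones. Its leading $2\times 2$ principal submatrix $\left[\begin{smallmatrix}\gamma & a+\gamma\\ a+\gamma & \gamma\end{smallmatrix}\right]$ has eigenvalues $2\gamma+a>0$ and $-a<0$, so Cauchy Interlacing forces at least one negative eigenvalue on $S$; hence $S$ has inertia $(1,0,2)$. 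Combining, $n(N)=(1,0,n-1)$, and by Sylvester's Law $n(D)=(1,0,n-1)$, i.e. $D$ is strongly unipositive. The genuinely new work is the peeling induction of the second paragraph; the concluding $3\times 3$ determinant estimate is a short triangle-inequality calculation.
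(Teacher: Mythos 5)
Your proposal is correct, and while its peeling stage coincides with the paper's, your endgame is genuinely different. The paper pushes the congruence one step further than you do---subtracting row/column $1$ from rows/columns $2$ and $3$---so that in its normal form each peeled vertex couples only to $v_1$ and the triangle contributes the off-diagonal entry $d_{23}-d_{12}-d_{13}$; it then finishes by citing the argument of Lemma~\ref{weightedstaredge}, i.e.\ an induction on $n$ with an even/odd case split on the sign of $\det X$, an adjugate computation of $X^{-1}_{11}$, and, in the base case (Lemma~\ref{star4}), an appeal to Perron--Frobenius for hollow nonnegative matrices. You stop peeling at the triangle instead, so your normal form $N$ keeps the full block $\Tc$ with rank-one coupling $\mathbf{1}d^{T}$ (this form is correct: a peeled row is constant on all not-yet-peeled columns, so later peelings zero the interactions among peeled vertices and leave the weight $d_i$ against each of the three cycle vertices), and then a single Schur-complement congruence against the negative definite block $-2\operatorname{diag}(d)$ collapses the whole theorem to the inertia of one $3\times 3$ matrix $S=\Tc+\gamma\mathbf{1}\mathbf{1}^{T}$, settled by the identity $\det S=2abc+\gamma\bigl(2(ab+bc+ca)-(a^{2}+b^{2}+c^{2})\bigr)$ and interlacing. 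What your route buys: no induction on $n$, no parity bookkeeping, no Perron--Frobenius, a finish that is self-contained rather than deferred to Lemmas~\ref{star4} and~\ref{weightedstaredge}, and a transparent role for the triangle inequality. What the paper's route buys is reuse of the star-plus-edge machinery it has already built.

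Two small repairs. Your parenthetical claim that the strict triangle inequalities on $a,b,c$ are \emph{equivalent} to $(\sqrt{b}-\sqrt{c})^{2}<a<(\sqrt{b}+\sqrt{c})^{2}$ is false (e.g.\ $a=3.9$, $b=c=1$ satisfies the latter but not the former); what you need, and what is true, is only the one-way implication $a<b+c\le(\sqrt{b}+\sqrt{c})^{2}$ together with its two symmetric versions, which indeed forces $2(ab+bc+ca)-(a^{2}+b^{2}+c^{2})>0$. Also, $\gamma>0$ presumes $n\ge 4$; when $n=3$ one has $\gamma=0$ and $S=\Tc$, and your determinant-plus-interlacing argument goes through unchanged, so the hypothesis should simply be $\gamma\ge 0$.
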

\begin{proof}
Let $T$ be a weighted unicyclic graph on $n$ vertices such that the cycle is a 3-cycle. First, we define an ordering on the vertices. Let the vertices of the cycle be $v_1$, $v_2$, and $v_3$. Label the rest of the vertices in the graph in an non-decreasing order of unweighted path length to $v_1$.
We can now construct the distance matrix $D=[d_{ij}]$, where $d_{ij}$ is the weighted path distance between vertices $i$ and $j$. Now, consider the matrix:
\begin{flalign*}
A_{ij}=I-E(i,j)
\end{flalign*}
i.e. $A_{ij}$ is the matrix with 1's on the diagonal, -1 in the ij'th position, and 0's everywhere else. Clearly, each $A_{ij}$ is non-singular. Note that $A_{ij}DA_{ij}^T$ subtracts the $j$th row from the $i$th row and subtracts the $j$th column from the $i$th column. Now, we have a chain of congruences on $D$ as follows:

Suppose that $T$ has $m$ leaves. For each leaf $v_i$, $n-m < i \leq n$, define $a(i)$ as the function which maps the index of a leaf to the index of vertex the leaf is adjacent to. Consider
\begin{flalign*}
A=\prod_{i~\mid~ v_i \text{ leaf}}A_{i,a(i)}
\end{flalign*}
where $A_{ij}$ is defined as in Lemma 1. Let $D_1=ADA^T$. Let $T_1$ be the weighted unicylclic graph formed from deleting those $m$ leaves from $T$. Now, $T_1$ has $m_1$ leaves and this process may be applied to form $D_2 = A_1D_1A_1^T$, where $A_1$ is the matrix analogous to $A$ for $T_1$. Repeat this until $T_{k-1}$ is the 3-cycle. Then, consider $v_2$ and $v_3$ to be pendent vertices as far as our algorithm is concerned and subtract the first row and first column from the second and third rows and second and third columns to get $D_k$. We claim that $D_k$ is of the form:
\[
\begin{bmatrix} 
    0 & d_{12} & d_{13} & d_{3} & \dots & d_{n-1}\\
    d_{12} & -2d_{12} & d_{23}-d_{12}-d_{13} & 0 & \dots & 0\\
    d_{13} & d_{23}-d_{12}-d_{13} & -2d_{13} & 0 & \dots & 0\\
    \\ d_3 & 0 & 0 & -2d_3 & \dots & 0 \\
    \vdots & \vdots & \vdots & \vdots & \ddots & \vdots\\
    d_{n-1} & 0 & 0 & 0 & \dots & -2d_{n-1} 
    \end{bmatrix}
\]
where $d_i$ is the weight on the edge between $v_{i+1}$ and the adjacent vertex which has a smaller path distance to $v_1$. The same argument applied in Lemma 3.7 shows that this matrix, which is congruent to $D$, has 1 positive eigenvalue and (n-1) negative eigenvalues. Thus, so does $D$.

We will prove that $D_k$ is indeed of this given form through induction on the maximum unweighted path length from $v_1$. Assume that this algorithm gives the desired form for any weighted unicyclic graph such that the cycle is a 3-cycle and such that every vertex has a path distance of $b$ or less from $v_1$. (Note that our base case, when $b=1$, means that the graph is a star with an extra edge, the case proven in Lemma 3.7.) Now, let us prove that this implies that our algorithm works for any weighted 3-unicylic graph such that every vertex has a path distance of $b+1$ or less from $v_1$.
Let $T$ be such a weighted 3-unicyclic graph on n vertices with a distance matrix $D$. Moreover, let $L$ be the set of leaves of path distance $b+1$ from $v_1$. Given the ordering used in our algorithm, if there are $l$ such leaves, these are the points $v_{n+1-l}, v_{n+2-l},...,v_n$. Note that for a leaf $v_i$, its distance from any other point $v_j$ (except itself), is $d_{i,a(i)}+d_{a(i),j}$. Let us now define
\begin{flalign*}
A'=\prod_{i~\mid~ v_i \in L}A_{i,a(i)}~.
\end{flalign*}
Of course, $A'$ is a factor of the matrix $A$ which is used in the first step of the algorithm and it will subtract the $a(i)$th row from the $i$th row for every $v_i\in L$. If we apply $A'$ to $D$, then the $i$th row of $D$ will be filled with $d_{i,a(i)}$ except the $ii$th entry which will be $-d_{i,a(i)}$, for $n-l<i\leq n$ (the notation $d_{i,a(i)}=d_{i-1}$ will be used for legibility):
\[
A'DA'^T=\begin{bmatrix} 
    0 & d_{12} & d_{13} & d{14} & \dots & d_{n-l} & d_{n-l+1} & \dots  & d_{n-1}\\
    d_{12} & 0 &  d_{23}& d_{24} & \dots & d_{n-l} & d_{n+1-l} & \dots & d_{n-1} \\
    d_{13} & d_{23} & 0 & d_{34} & \dots & d_{n-l} & d_{n+1-l} & \dots & d_{n-1} \\
    d_{14} & d_{24} &  d_{34}& 0 & \dots & d_{n-l} & d_{n+1-l} & \dots & d_{n-1} \\
    \vdots & \vdots & \vdots & \vdots & \ddots & \vdots & \vdots & \ddots & \vdots \\
    d_{n-l} & d_{n-l} & d_{n-l}& d_{n-l}& \dots & -2d_{n-l} & 0 & \dots & 0\\
    d_{n+1-l} &  d_{n+1-l} &d_{n+1-l} &  d_{n+1-l} & \dots &  0 &  -2d_{n+1-l} & \dots &  0 \\
    \vdots & \vdots &\vdots & \vdots & \ddots & \vdots & \vdots & \ddots & \vdots \\
    d_{n-1} & d_{n-1} & d_{n-1} & d_{n-1} & \dots & 0 & 0 & \dots & -2d_{n-1}
    \end{bmatrix}.
\]
Next, it is time to consider the upper left $(n-l)$x$(n-l)$ principal submatrix which has so far been left untouched. Notice that this submatrix is the distance matrix associated with the subgraph of $T$ composed of the vertices with path distance $b$ or less from $v_1$. However, it is our inductive hypothesis that our algorithm returns the desired form for any weighted 3-unicylic graph of vertices with path distance $b$ or less from a given $v_1$. So we may use our algorithm to ensure that the upper left block is in the correct form. Now, let us consider the impact of doing this on the matrix $D$ as a whole. Since the process will only subtract from rows and columns with indices less than or equal to $n-l$, the lower right $l$x$l$ will not change. However, note that every column in the lower left $l$x$(n-l)$ is the same. Thus, whenever we subtract the $a(i)$th column from the $i$th column for $1<i\leq n-l$, the $i$th column in this lower left block is zeroed. Similarly, every row in the upper right $(n-l)$x$l$ is the same. Thus, whenever we subtract the $a(i)$th row from the $i$th row for $1<i\leq n-l$, the $i$th row in this upper right block is zeroed. Our algorithm subtracts exactly one row from each row except for the first and subtracts exactly one column from each column except the first. Thus, in the upper right and lower left blocks, every entry is 0 except for those in the first row or column of $D$ once our algorithm on the upper left $(n-l)$x$(n-l)$ principal submatrix finishes: 
\[
D_k=\begin{bmatrix} 
    0 & d_{12} & d_{13} & d_{14} & \dots & d_{n-l} & d_{n-l+1} & \dots  & d_{n-1}\\
    d_{12} & -2d_{12} &  d_{23}-d_{12}-d_{13} & 0 & \dots & 0 & 0 & \dots & 0 \\
    d_{13} & d_{23}-d_{12}-d_{13} & -2d_{13} & 0 & \dots & 0 & 0 & \dots & 0 \\
    d_{14} & 0 &  0& -2d_{14} & \dots & 0 & 0 & \dots & 0 \\
    \vdots & \vdots & \vdots & \vdots & \ddots & \vdots & \vdots & \ddots & \vdots \\
    d_{n-l} & 0 & 0& 0& \dots & -2d_{n-l} & 0 & \dots & 0\\
    d_{n+1-l} &  0 &0& 0 & \dots &  0 &  -2d_{n+1-l} & \dots &  0 \\
    \vdots & \vdots &\vdots & \vdots & \ddots & \vdots & \vdots & \ddots & \vdots \\
    d_{n-1} & 0& 0 & 0 & \dots & 0 & 0 & \dots & -2d_{n-1}
    \end{bmatrix}.
\]
Since this is exactly the form we claimed and the case $b=1$ works, we have proven that the distance matrix of a weighted 3-unicylic graph of arbitrary size has 1 positive eigenvalue and (n-1) negative eigenvalues.
\end{proof}
\section{Inertia of Rationally Weighted Unicylic Graphs}
Next, we desired to characterize the inertia of weighted unicyclic graphs with larger cycles. Unfortunately, it becomes hard to extend the congruence strategy to these cases. However, we were able to obtain some partial results by extending the results of \cite{MR2133282} using an edge subdivision argument.
\begin{defn}
Given a graph $G=(V,E)$ with vertex set $V$ and edge set $E$, \textbf{edge subdvision} of an edge $\{u,v\} \ \in \ E$ is the operation of deleting $\{u,v\}$ from $E$, adding a new vertex $w$ to $V$, and adding the edges $\{u,w\}$ and $\{w,v\}$ to $E$. In the case of a weighted graph, the new edges will be given weights that sum to the weight of $\{u,v\}$.
\end{defn}
Roughly, the edge subdivision argument is as follows: given an appropriate weighted graph $G$, we can edge subdivide the graph until we receive an unweighted graph $G'$. Then, the distance matrix of the original graph will be a principal submatrix of the distance matrix of $G'$. Using results on the inertia of $G'$ and the Cauchy Interlacing Thereom, we are able to obtain results on the inertia of $G$. 

Recall the following theorems from \cite{MR2133282}:
\begin{thm}
Let $G$ be an unweighted unicyclic graph with $2k+1+m$ vertices and cycle length $2k+1$. Then the inertia of the shortest path distance matrix $D(G)$ is $(n_+(D), n_0(D), n_-(D))=(1,0,2k+m)$. That is to say, $D$ is strongly unipositive. 
\end{thm}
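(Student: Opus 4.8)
The plan is to split the statement into two essentially independent parts: an outer induction that removes the pendant trees hanging off the cycle, which reuses the congruence machinery of Theorem~\ref{weightedtree} and Lemma~\ref{weightedstaredge} almost verbatim, and a base case for the bare odd cycle $C_{2k+1}$, which is the only place the odd cycle length is actually used.

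For the reduction I would order the vertices so that the $2k+1$ cycle vertices come first (one of them designated the root $v_1$) and the remaining $m$ vertices are listed in non-decreasing distance from $v_1$. Since no pendant tree creates a shortcut between two cycle vertices, the distance between cycle vertices in $G$ is exactly their distance around $C_{2k+1}$, so the cycle's distance matrix sits as a principal submatrix of $D(G)$. Now peel the leaves exactly as in Theorem~\ref{weightedtree}: for each leaf $v_i$ with neighbour $v_{a(i)}$, the congruence $A_{i,a(i)} D A_{i,a(i)}^{T}$ subtracts row/column $a(i)$ from row/column $i$, and because $d_{ij}=d_{i,a(i)}+d_{a(i),j}$ for a pendant, this places $-2d_{i,a(i)}$ on the diagonal and zeros every off-diagonal entry of that row and column except the coupling to the root. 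Iterating down to the cycle produces, by Sylvester's Law, a matrix congruent to $D(G)$ with a leading block coming from the cycle, a negative diagonal block from the pendants, and couplings only through the first row and column. Exactly as in Lemma~\ref{weightedstaredge}, one adds the pendants back one at a time, using the Schur determinant formula to track the sign of the determinant (each new negative diagonal entry flips it) together with Cauchy interlacing (Theorem~\ref{Cauchy}) to hold the positive eigenvalue count at one. This reduces the whole theorem to showing that $C_{2k+1}$ itself has inertia $(1,0,2k)$.

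For the base case I would diagonalize directly, since the distance matrix $C$ of $C_{2k+1}$ is a symmetric circulant matrix with first row $(0,1,2,\dots,k,k,\dots,2,1)$, i.e. $c_r=\min(r,\,n-r)$ with $n=2k+1$, whose eigenvalues are $\lambda_j=\sum_{r} c_r\cos(2\pi jr/n)$. The clean way to evaluate them is to pass to the circular second difference $g_r=c_{r-1}-2c_r+c_{r+1}$: the sequence $c_r$ is piecewise linear with a single minimum at $r=0$ and a flat maximum at $r=k,k+1$, so $g_0=2$, $g_k=g_{k+1}=-1$, and $g_r=0$ otherwise. Since convolution with the second-difference kernel multiplies the $j$-th Fourier coefficient by $-4\sin^2(\pi j/n)$, and since $2k\equiv-1$ and $2(k+1)\equiv1 \pmod n$ give $\cos(2\pi jk/n)=\cos(2\pi j(k+1)/n)=(-1)^j\cos(\pi j/n)$, one obtains for $j\neq 0$ the closed form
\[
\lambda_j=\frac{(-1)^j\cos(\pi j/n)-1}{2\sin^2(\pi j/n)}.
\]
For $1\le j\le n-1$ the denominator is positive while $(-1)^j\cos(\pi j/n)\le|\cos(\pi j/n)|<1$ makes the numerator strictly negative, so every such $\lambda_j<0$; meanwhile $\lambda_0=\sum_r c_r=k(k+1)>0$. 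Hence $C$ has one positive eigenvalue, no zero eigenvalue, and $2k$ negative eigenvalues, giving inertia $(1,0,2k)$ and completing the reduction.

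The main obstacle is this base case, and the step to watch is the sign analysis of $\lambda_j$, which is precisely where oddness is indispensable. The identities $2k\equiv-1$ and $2(k+1)\equiv1\pmod n$ rely on $n=2k+1$, and the same computation for an even cycle $C_{2k}$ instead gives $g_0=2$, $g_k=-2$, and $\hat g_j=2-2(-1)^j$, which vanishes for every even $j\neq0$ and therefore forces zero eigenvalues. Thus the congruence and Schur reduction amount to routine bookkeeping once the cycle is understood, and the real content — along with the reason the statement fails for even cycles — lives entirely in the circulant eigenvalue computation.
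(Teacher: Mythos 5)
First, a point of comparison: the paper never proves this statement --- it is quoted from \cite{MR2133282} as an ingredient for the edge-subdivision arguments of Section 3 --- so your attempt cannot be matched against an internal proof; it must stand on its own. Your base case does stand: the distance matrix of $C_{2k+1}$ is the circulant with symbol $c_r=\min(r,n-r)$, its circular second difference is $2$ at $r=0$ and $-1$ at $r=k,k+1$, and the resulting closed form $\lambda_j=\bigl((-1)^j\cos(\pi j/n)-1\bigr)/\bigl(2\sin^2(\pi j/n)\bigr)<0$ for $1\le j\le n-1$, together with $\lambda_0=k(k+1)>0$, is correct (for $C_3$ it returns $(2,-1,-1)$), and your observation that the computation degenerates for even cycles is also correct. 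This is a clean, self-contained treatment of the bare odd cycle.

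The reduction, however, has a genuine gap, and it sits exactly where you declare the argument ``routine bookkeeping.'' A structural point first: after peeling the pendant trees, the couplings do \emph{not} run only through the first row and column. A pendant $u$'s row, once its parent's row and column are subtracted, is constant $d_{u,p(u)}$ off the diagonal; later operations zero its entries in other pendant columns, but the cycle columns are never operated on, so the entry $(u,c)=d_{u,p(u)}$ survives for \emph{every} cycle vertex $c$, leaving the reduced matrix $\begin{pmatrix} D_C & \mathbf{1}w^T \\ w\mathbf{1}^T & -2\operatorname{diag}(w)\end{pmatrix}$ with a rank-one coupling block, not the paper's normal form. Equivalently, if you add pendants back one at a time, the bordered row after the congruence is $d_u\mathbf{1}^T$, not $d_u e_1^T$. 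The serious issue follows from this: the determinant does not flip sign automatically ``with each new negative diagonal entry.'' By the Schur formula, $\det M=\det(D')\cdot\bigl(-2d_u-d_u^2\,\mathbf{1}^T(D')^{-1}\mathbf{1}\bigr)$, so you must prove the Schur complement is negative, i.e.\ $\mathbf{1}^T(D')^{-1}\mathbf{1}>-2/d_u$. You give no argument for this, and given the induction hypothesis on $D'$, this inequality is (via interlacing, Theorem~\ref{Cauchy}) \emph{equivalent} to the inertia claim for $G$; as written, the inductive step assumes its own conclusion. This is precisely the step that Lemma~\ref{weightedstaredge} had to do honest work for --- there the coupling was to $v_1$ alone and $X^{-1}_{11}>0$ was extracted from an explicit cofactor of a nearly diagonal matrix, a computation with no analogue for a general cycle block. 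The fix is to strengthen the induction hypothesis: carry along the claim $t:=\mathbf{1}^TD^{-1}\mathbf{1}>0$. For the bare odd cycle, $\mathbf{1}$ is the Perron eigenvector, so $t=(2k+1)/\bigl(k(k+1)\bigr)>0$; and if $t>0$ for $D'$, then the Schur complement $-2d_u-d_u^2t$ is negative (so the new eigenvalue is negative and the inertia propagates), while the block-inverse formula gives the new value $t'=2t/(2+d_ut)>0$, closing the induction. With that amendment your outline becomes a complete proof, and one valid for arbitrary positive pendant edge weights, not just unit ones.
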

\begin{thm}
Let $G$ be an unweighted unicyclic graph with $2k+m$ vertices and cycle length $2k$. Then the inertia of the shortest path distance matrix $D(G)$ is $(n_+(D), n_0(D), n_-(D))=(1,k-1,k+m)$.
\end{thm}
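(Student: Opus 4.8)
The plan is to induct on $m$, the number of vertices off the cycle, taking the bare even cycle $C_{2k}$ ($m=0$) as the base case. For the base case the distance matrix $D$ is the symmetric circulant generated by the tent sequence $d_\ell=\min(\ell,2k-\ell)$, with eigenvalues $\lambda_j=\sum_{\ell=0}^{2k-1}d_\ell\,\omega^{j\ell}$, $\omega=e^{\pi i/k}$. The cleanest route to their signs is the circular second difference $\delta_\ell=d_{\ell-1}-2d_\ell+d_{\ell+1}$ (indices mod $2k$): a one-line check gives $\delta_0=d_{2k-1}-2d_0+d_1=2$, $\delta_k=(k-1)-2k+(k-1)=-2$, and $\delta_\ell=0$ otherwise, so its transform is $\widehat\delta_j=2-2(-1)^j$; on the other hand the circulant identity gives $\widehat\delta_j=-4\sin^2(\pi j/2k)\,\lambda_j$. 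Equating the two, for $j\neq 0$ I obtain $\lambda_j=((-1)^j-1)/(2\sin^2(\pi j/2k))$, which vanishes for the $k-1$ even $j$ and equals $-\csc^2(\pi j/2k)<0$ for the $k$ odd $j$, while $\lambda_0=\sum_\ell d_\ell=k^2>0$. This yields inertia $(1,k-1,k)$.

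For the inductive step I would take $G$ on $2k+m$ vertices, order the cycle first, pick a leaf $v$ of an attached tree with neighbour $v_p$, and set $G'=G-v$. Deleting a pendant leaves all other distances unchanged, so $D(G')$ is the leading principal block of $D(G)=\begin{bmatrix}D(G')&c\\ c^T&0\end{bmatrix}$ with $c=\mathbf{1}+D(G')e_p$ (edge weight $1$). Conjugating by the elementary congruence $A_{v,p}=I-E(v,p)$ from Lemma 1 replaces $c$ by $\mathbf{1}$ and the corner by $-2$; a further congruence by $\begin{bmatrix}I&\tfrac12\mathbf{1}\\ 0&1\end{bmatrix}$ clears the border, so by Sylvester's Law $D(G)$ has the inertia of $\big(D(G')+\tfrac12\mathbf{1}\mathbf{1}^T\big)\oplus[-2]$. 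Everything thus reduces to a rank-one positive semidefinite perturbation of $D(G')$.

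Two facts would then close the induction. For the nullity, I would verify that the $k-1$ null vectors of $D(C_{2k})$ — the even harmonics, all orthogonal to $\mathbf{1}$ — extend by zero to null vectors of $D(G)$: for a tree vertex $t$ meeting the cycle at $u$ one has $d_{t\ell}=\mathrm{dist}(t,u)+d_{u\ell}$, so that row contributes $\mathrm{dist}(t,u)(\mathbf{1}^Tx)+(D(C_{2k})x)_u=0$. With the inductive hypothesis $n_0(D(G'))=k-1$ this forces $\ker D(G')\subseteq\mathbf{1}^\perp$, hence $\mathbf{1}\in\mathrm{range}\,D(G')$, so the perturbation fixes the kernel and $n_0(D(G))=k-1$. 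On its $(k+m)$-dimensional range $D(G')$ has inertia $(1,0,k+m-1)$; since $\mathbf{1}$ lies there, the matrix determinant lemma shows the perturbation preserves the signs of the nonzero eigenvalues exactly when $1+\tfrac12\mathbf{1}^TD(G')^{+}\mathbf{1}>0$, and in that case $D(G)$ has inertia $(0,0,1)+(1,k-1,k+m-1)=(1,k-1,k+m)$.

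The main obstacle is precisely this inequality, $\mathbf{1}^TD(G')^{+}\mathbf{1}>-2$; equivalently, it is the assertion that $D(G)$ stays negative semidefinite on $\mathbf{1}^\perp$, i.e. that $G$ remains strongly unipositive. Neither the Cauchy Interlacing Theorem (\Cref{Cauchy}), which only gives $n_+(D(G))\le n_+(D(G'))+1=2$, nor a Perron--Frobenius argument in the style of Lemma \ref{star4} (two positive eigenvalues are not excluded once $n\ge 4$) settles the sign by itself. I expect to resolve it by deriving a Graham--Lovász-type closed form for the pseudoinverse of the distance matrix of a unicyclic graph, from which $\mathbf{1}^TD^{+}\mathbf{1}$ can be evaluated and shown positive (it equals $2/k$ for the bare cycle); carrying the sharper invariant $\mathbf{1}^TD^{+}\mathbf{1}>0$ through the induction is delicate only because the pseudoinverse is not covariant under the congruences used above.
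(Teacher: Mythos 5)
Your base case and reduction are both correct: the circulant second-difference computation (giving $\lambda_0=k^2$, $k$ negative odd modes, $k-1$ vanishing even modes) is a clean way to get the inertia of $C_{2k}$, and the two congruences reducing $D(G)$ to $\bigl(D(G')+\tfrac12\mathbf{1}\mathbf{1}^T\bigr)\oplus[-2]$ check out. But the proposal has a genuine gap, the very one you flag: the inequality $1+\tfrac12\mathbf{1}^TD(G')^{+}\mathbf{1}>0$ is never proved, and it is not a technicality --- it is exactly the point where the theorem could fail. A kernel vector of $D(G)$ with nonzero last coordinate exists precisely when $\mathbf{1}^TD(G')^{+}\mathbf{1}=-2$, and a second positive eigenvalue appears when that quantity drops below $-2$; as you correctly note, neither interlacing nor a Perron--Frobenius argument can rule this out. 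Deferring it to a hoped-for Graham--Lov\'asz-type formula for unicyclic graphs is a research plan, not a proof. For comparison: the paper does not prove this statement at all --- it is recalled from Bapat, Kirkland, and Neumann \cite{MR2133282} as an input to the edge-subdivision arguments of Section 3 --- so your attempt must stand alone, and as written it is incomplete.

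The gap is closable inside your own framework, and more easily than your closing paragraph fears: the congruence-covariance problem disappears if you compute $\alpha:=\mathbf{1}^TD(G)^{+}\mathbf{1}$ by solving the bordered system directly rather than through the congruence. By induction $\ker D(G')\perp\mathbf{1}$, so $\mathbf{1}\in\mathrm{range}\,D(G')$; set $\alpha'=\mathbf{1}^TD(G')^{+}\mathbf{1}$ and solve $D(G)w=\mathbf{1}$ with $w=(u,s)$. The top block gives $D'(u+se_p)=(1-s)\mathbf{1}$; hitting this with $e_p^T$ and using hollowness gives $e_p^TD'u=1-s$, so the border row $\mathbf{1}^Tu+e_p^TD'u=1$ forces $\mathbf{1}^Tu=s$; finally, taking $\mathbf{1}^T$ of $u+se_p=(1-s)D'^{+}\mathbf{1}+(\text{kernel vector})$ yields $2s=(1-s)\alpha'$, hence $\alpha=\mathbf{1}^Tw=2s=\frac{2\alpha'}{2+\alpha'}$, i.e.\ $\frac{1}{\alpha}=\frac{1}{\alpha'}+\frac12$. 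Starting from $\alpha=2/k$ for the bare cycle, every pendant attachment adds $\tfrac12$ to $1/\alpha$, so $\mathbf{1}^TD(G)^{+}\mathbf{1}=\frac{2}{k+m}>0$ for every unweighted unicyclic $G$ with cycle $2k$ and $m$ tree vertices. Carrying the triple invariant (inertia, $\ker D\perp\mathbf{1}$, $\alpha=2/(k+m)$) through your pendant induction finishes the proof: positivity of $\alpha'$ gives $1+\tfrac12\alpha'>0$, so your determinant-lemma and parity argument pins the nonzero spectrum, and $\alpha'\neq-2$ forces the kernel of $D(G)$ to be exactly the zero-extension of $\ker D(G')$, keeping the nullity at $k-1$.
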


We need to define a proper setting for our edge subdivision strategy to be applied. The following definitions characterize unicyclic graphs to which this can be applied:

\begin{defn}
We say that a cycle $C$ has \textbf{odd path-length sum} if the following conditions hold:
\begin{enumerate}
    \item The sum of the edges of the cycle is $\frac{a}{b}$, $a,b \in \mathbb{Z}$ in simplest form.
    \item If any of the weights are non-integers, then $a=2k+1$ for $k \in \mathbb{Z}$.
    \item If all the weights are integers, then $\frac{a}{c}=2k+1$ for $k \in \mathbb{Z}$ where $c$ is the greatest common factor among the edge weights.
\end{enumerate}
\end{defn}
\begin{rem}
"Even path-length sum" is defined analagously.
\end{rem}

Given a rationally weighted unicylic graph with odd/even-path length sum, we obtain the following results using the edge subdivision argument:

\begin{thm}
Let D be the distance matrix of a rationally weighted unicyclic graph $G$ on $n$ vertices such that the cycle has odd path-length sum. Then D has 1 positive eigenvalue and (n-1) negative eigenvalues, so D is strongly unipositive.
\end{thm}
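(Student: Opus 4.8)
The plan is to reduce the weighted statement to the unweighted odd-cycle theorem recorded above, using edge subdivision together with Cauchy interlacing. First I would normalize the weights. Since multiplying every edge weight by a fixed positive constant multiplies $D$ by that constant and hence preserves inertia, I may rescale $G$ to a \emph{primitive integer weighting}, i.e.\ all edge weights positive integers with greatest common divisor $1$. The role of the odd path-length sum hypothesis is precisely to guarantee that, in this normalized weighting, the total weight around the cycle is an \emph{odd} integer: this is what the two cases of the definition encode (in the all-integer case one divides out the gcd $c$, so that the normalized cycle sum is $a/c$, which is odd; the non-integer case first clears denominators). I would isolate this parity reduction as a short preliminary lemma.

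Next I would subdivide. Replacing each edge of integer weight $w_e$ by a path of $w_e$ unit edges produces an unweighted unicyclic graph $G'$ on $N = n + \sum_e (w_e-1)$ vertices whose cycle length equals the cycle sum of $G$, hence is odd, say $2k+1$. The structural point is that every subdivision vertex has degree $2$, so any shortest path in $G'$ between two original vertices must traverse complete subdivided edges and therefore corresponds to a path of equal total weight in $G$, and conversely. Thus shortest-path distances between original vertices are preserved exactly, and the rescaled matrix $D(G)$ is precisely the principal submatrix of $D(G')$ indexed by the $n$ original vertices.

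Now I would invoke the unweighted result: since $G'$ is unweighted unicyclic with odd cycle length $2k+1$, its distance matrix has inertia $(1,0,N-1)$, so if $\alpha_1 \le \cdots \le \alpha_N$ are its eigenvalues then $\alpha_1 \le \cdots \le \alpha_{N-1} < 0 < \alpha_N$. Let $\beta_1 \le \cdots \le \beta_n$ be the eigenvalues of the principal submatrix $D(G)$ (rescaled). By the Cauchy Interlacing Theorem, $\beta_k \le \alpha_{k+N-n}$ for each $k$; taking $k \le n-1$ gives $\beta_k \le \alpha_{N-1} < 0$, so $D(G)$ has at least $n-1$ strictly negative eigenvalues. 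Finally, since $D(G)$ is a nonzero symmetric matrix with zero diagonal (its trace is $0$, and for a unicyclic graph $n \ge 3$ so it is not the zero matrix), not all eigenvalues can be nonpositive; hence $\beta_n > 0$, and the inertia is forced to be exactly $(1,0,n-1)$. Undoing the positive rescaling preserves these signs, so $D$ is strongly unipositive.

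The main obstacle I anticipate is the parity bookkeeping in the first step: one must verify that ``odd path-length sum'' genuinely translates, after clearing denominators and dividing out the common factor, into an odd cycle length in $G'$, because an ill-chosen scaling can flip the parity of the cycle sum. Everything downstream is routine once this normalization is pinned down. It is worth noting that interlacing by itself only supplies the $n-1$ negative eigenvalues; the single positive eigenvalue is not forced by interlacing (the relevant lower bound $\beta_n \ge \alpha_n$ is negative) but comes for free from the vanishing trace.
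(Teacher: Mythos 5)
Your proposal follows essentially the same route as the paper's own proof: rescale the rational weights to integers, subdivide each edge into unit edges so that the (rescaled) matrix $D$ becomes a principal submatrix of the distance matrix of an unweighted unicyclic graph, invoke the unweighted odd-cycle theorem, extract $n-1$ negative eigenvalues by Cauchy interlacing, and force the single positive eigenvalue from the vanishing trace. In two respects your write-up is tighter than the paper's: you attribute the principal-submatrix step to interlacing (the paper invokes ``Sylvester's law of inertia'' there, though passing to a principal submatrix is not a congruence), and you justify via the degree-$2$ observation that subdivision preserves shortest-path distances, which the paper leaves implicit.

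However, the ``short preliminary lemma'' you rely on is false. Odd path-length sum does \emph{not} guarantee that the primitive integer weighting has odd cycle sum. Consider the $4$-cycle with weights $\frac{1}{2},\frac{1}{2},1,1$: the cycle sum is $3=\frac{a}{b}$ with $a=3$ odd and a non-integer weight present, so the theorem's hypothesis holds; but the primitive integer weighting is $(1,1,2,2)$, with cycle sum $6$. Your subdivided graph $G'$ is then an (even) $6$-cycle whose distance matrix has inertia $(1,2,3)$, and interlacing can no longer exclude zero eigenvalues of $D$; the argument stalls exactly at the step you flagged as ``parity bookkeeping.'' (The conclusion is still true for this graph --- its inertia is $(1,0,3)$ --- but not by this route, and no rescaling repairs it: any scaling that makes all four weights integers necessarily makes the cycle sum even.) You should also know that the paper's own proof contains the same gap: in the non-integer case it scales by the denominator $b$ of the cycle sum alone (here $b=1$) and then asserts that $bG$ can be subdivided into unit edges, which is impossible while individual edge weights remain fractional. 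So your proposal is a faithful, and in places cleaner, rendering of the paper's argument, but both arguments are complete only under the stronger hypothesis that the primitive integer weighting itself has odd cycle sum --- which is what condition (3) of the definition provides in the all-integer case (reading $c$ as the gcd over all edges of $G$), and what condition (2) fails to provide otherwise.
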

\begin{proof}
Let $G$ be a rationally weighted unicyclic graph on $n$ vertices and let $C$ be its cycle with at least one edge whose weight is not an integer and let $D$ be the shortest path distance matrix of $C$. If the total path edge weight of $C$ is $\frac{a}{b}$, $a,b \in \mathbb{Z}$ and $a=2k+1$ for $k \in \mathbb{Z}$, then examine $bD$. $bD$ must have the same inertia as $D$ and it is the shortest path matrix of $bG$ where $bG$ is the graph formed by multiplying very edge weight in $G$ by $b$. However $bD$ is also a principal submatrix of the shortest path matrix of an unweighted unicyclic graph with a cycle of length $2k+1$., specifically the matrix of the graph formed from subdividing each edge of $bD$ into edges with weight $1$. Let us call this graph $(bG)'$ and its matrix $(bD)'$. Since $(bD)'$ has 1 positive eigenvalue and the rest negative eigenvalues, by Sylvester's law of inertia, $bD$ and thus $D$ may have one of three possible inertias: $(1,0,n-1)$, $(0,1,n-1)$, $(0,0,n)$. However, since $D$ has a trace of 0, the sum of its eigenvalues must be 0, so it is known that $D$ must have at least 1 positive eigenvalue. Thus, the inertia of $D$ must be $(n_+(D), n_0(D), n_-(D))=(1,0,n-1)$.

Instead iff all the weights are integers and $\frac{a}{c}=2k+1$ for $k \in \mathbb{Z}$ where $a$ is the total weighted path sum of $C$ and $c$ is the greatest common factor among the edge weights, then instead examine $\frac{1}{c}D$. This will be a principal submatrix of the shortest path matrix of an unweighted unicyclic graph with a cycle of length $2k+1$, specifically the matrix of the graph formed from subdividing each edge of $\frac{1}{c}D$ into edges with weight $1$. From here, the case is identical.
\end{proof}
\begin{thm}
Let D be the distance matrix of a rationally weighted unicyclic graph $G$ on $n$ vertices such that the cycle has even path-length sum. Then, D has 1 positive eigenvalue, $\leq k-1$ zero eigenvalues, and the rest negative eigenvalues.
\end{thm}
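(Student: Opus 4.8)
The plan is to mirror the edge-subdivision argument of the preceding (odd path-length sum) theorem, the only new feature being that the unweighted graph we subdivide into now has an \emph{even} cycle, so its distance matrix carries $k-1$ zero eigenvalues that must be tracked through the interlacing. First I would clear denominators exactly as in the odd case: since the cycle has even path-length sum, there is a positive rational scaling factor (multiplication by $b$ when some edge weight is non-integral, or by $\tfrac{1}{c}$ with $c$ the gcd of the weights when all are integral) for which the rescaled matrix $\tilde D$ has the same inertia as $D$ by Sylvester's Law, is the shortest-path matrix of a graph all of whose edge weights are positive integers, and whose cycle has total weight exactly $2k$. Subdividing every edge of that graph into unit-weight edges then produces an unweighted unicyclic graph $G'$ whose cycle has length $2k$, and $\tilde D$ appears as a principal submatrix of $D(G')$.

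Next I would apply the even-cycle case of the unweighted result recalled above to $D(G')$. Writing $N = 2k+m'$ for the number of vertices of $G'$, that result gives the inertia $(1,k-1,k+m')$. Ordering the eigenvalues $\alpha_1 \le \cdots \le \alpha_N$ of $D(G')$, the negatives are $\alpha_1,\dots,\alpha_{N-k}$, the zeros are $\alpha_{N-k+1},\dots,\alpha_{N-1}$, and $\alpha_N > 0$. Letting $\beta_1 \le \cdots \le \beta_n$ be the eigenvalues of $\tilde D$, the Cauchy Interlacing Theorem (Theorem \ref{Cauchy}) gives $\alpha_j \le \beta_j \le \alpha_{j+N-n}$ for each $j$.

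The entire conclusion then falls out of the single inequality $\beta_j \le \alpha_{j+N-n}$. For the positive count, $\beta_j > 0$ forces $\alpha_{j+N-n} > 0$, hence $j = n$, so $\tilde D$ has at most one positive eigenvalue; since $\tilde D$ is hollow its trace vanishes, forcing at least one positive eigenvalue, so $n_+ = 1$ exactly. For the negative count, $\alpha_{j+N-n} < 0$ precisely when $j \le n-k$, so $\beta_1,\dots,\beta_{n-k}$ are negative and $n_- \ge n-k$. Combining these, $n_0 = n - n_+ - n_- \le n - 1 - (n-k) = k-1$, which is the claim; in the degenerate range $n \le k$ the bound is immediate from $n_0 \le n - 1$.

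The step I expect to be most delicate is the scaling and subdivision bookkeeping rather than the interlacing: one must verify that the chosen rescaling simultaneously clears \emph{all} edge denominators (so that $G'$ is genuinely unweighted) and leaves the cycle length equal to exactly $2k$, so that the even-cycle theorem applies with the same $k$ that appears in the conclusion. The conceptual difference from the odd case is also worth flagging explicitly: there the larger matrix had no zero eigenvalues, so interlacing together with the trace argument pinned the inertia down completely, whereas here the $k-1$ zeros of $D(G')$ can only be pushed \emph{upward} through interlacing, which is exactly why the statement records the one-sided bound $n_0 \le k-1$ rather than an exact value.
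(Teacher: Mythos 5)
Your proposal is correct and follows essentially the same route as the paper: rescale to clear denominators, subdivide into an unweighted unicyclic graph whose cycle has length $2k$, invoke the unweighted even-cycle inertia result, and finish with interlacing plus the trace argument. Your explicit bookkeeping ($n_+ \le 1$ and $n_- \ge n-k$ from $\beta_j \le \alpha_{j+N-n}$, hence $n_0 \le k-1$) is in fact more careful than the paper's own write-up, which attributes this step to Sylvester's Law of Inertia when Cauchy interlacing (Theorem~\ref{Cauchy}) is what is actually being used.
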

\begin{proof}
Let $G$ be a rationally weighted unicyclic graph on $n$ vertices and let $C$ be its cycle with at least one edge whose weight is not an integer and let $D$ be the shortest path distance matrix of $C$. If the total path edge weight of $C$ is $\frac{a}{b}$, $a,b \in \mathbb{Z}$ and $a=2k$ for $k \in \mathbb{Z}$, then examine $bD$. $bD$ must have the same inertia as $D$ and it is the shortest path matrix of $bG$ where $bG$ is the graph formed by multiplying very edge weight in $G$ by $b$. However $bD$ is also a principal submatrix of the shortest path matrix of an unweighted unicyclic graph with a cycle of length $2k$., specifically the matrix of the graph formed from subdividing each edge of $bD$ into edges with weight $1$. Let us call this graph $(bG)'$ and its matrix $(bD)'$. Since $(bD)'$ has 1 positive eigenvalue, the zero eigenvalue with multiplicity $k-1$, and the rest negative eigenvalues, by Sylvester's law of inertia, $bD$ and thus $D$ may not have greater than $k-1$ zero eigenvalues. However, since $D$ has a trace of 0, the sum of its eigenvalues must be 0, so it is known that $D$ must have at least 1 positive eigenvalue. Thus, $D$ has 1 positive eigenvalue, $\leq k-1$ zero eigenvalues, and the rest negative eigenvalues.

Instead iff all the weights are integers and $\frac{a}{c}=2k+1$ for $k \in \mathbb{Z}$ where $a$ is the total weighted path sum of $C$ and $c$ is the greatest common factor among the edge weights, then instead examine $\frac{1}{c}D$. This will be a principal submatrix of the shortest path matrix of an unweighted unicyclic graph with a cycle of length $2k+1$, specifically the matrix of the graph formed from subdividing each edge of $\frac{1}{c}D$ into edges with weight $1$. From here, the case is identical.
\end{proof}



\newpage
\printbibliography[
heading=bibintoc,
title={References}
]
\end{document}